\documentclass[11pt,reqno]{amsart}
\usepackage[margin=1.15in]{geometry}
\usepackage[T1]{fontenc}
\usepackage{lmodern}
\usepackage{amsmath,amssymb,amsthm}
\usepackage{microtype}
\usepackage{xcolor}
\definecolor{linkblue}{RGB}{0,51,102}
\usepackage[colorlinks=true,allcolors=linkblue]{hyperref}
\makeatletter
\renewcommand{\@secnumfont}{\bfseries}
\renewcommand{\section}{\@startsection{section}{1}{\z@}%
  {1.35\linespacing\@plus.3\linespacing\@minus.2\linespacing}%
  {.5\linespacing}%
  {\normalfont\large\bfseries\raggedright}}
\renewcommand{\subsection}{\@startsection{subsection}{2}{\z@}%
  {1.05\linespacing\@plus.2\linespacing\@minus.1\linespacing}%
  {.35\linespacing}%
  {\normalfont\normalsize\bfseries\raggedright}}
\renewcommand{\subsubsection}{\@startsection{subsubsection}{3}{\z@}%
  {.8\linespacing\@plus.2\linespacing\@minus.1\linespacing}%
  {.25\linespacing}%
  {\normalfont\normalsize\bfseries\raggedright}}
\makeatother

\makeatletter
\def\@settitle{\begin{center}%
  \baselineskip14\p@\relax
  \bfseries
  \@title
  \end{center}%
}
\def\@setauthors{%
  \begingroup
  \def\thanks{\protect\thanks@warning}%
  \trivlist
  \centering\footnotesize \@topsep30\p@\relax
  \advance\@topsep by -\baselineskip
  \item\relax
  \author@andify\authors
  \def\\{\protect\linebreak}%
  \authors
  \ifx\@empty\contribs
  \else
    ,\penalty-3 \space \@setcontribs
    \@closetoccontribs
  \fi
  \endtrivlist
  \endgroup
}
\def\@setaddresses{\par
  \nobreak \begingroup
  \footnotesize
  \def\author##1{\nobreak\addvspace\bigskipamount}%
  \def\\{\unskip, \ignorespaces}%
  \interlinepenalty\@M
  \def\address##1##2{\begingroup
    \par\addvspace\bigskipamount\indent
    \@ifnotempty{##1}{(\ignorespaces##1\unskip) }%
    {\normalfont\ignorespaces##2}\par\endgroup}%
  \def\curraddr##1##2{\begingroup
    \@ifnotempty{##2}{\nobreak\indent\curraddrname
      \@ifnotempty{##1}{, \ignorespaces##1\unskip}\/:\space
      ##2\par}\endgroup}%
  \def\email##1##2{\begingroup
    \@ifnotempty{##2}{\nobreak\indent\ttfamily##2\par}\endgroup}%
  \def\urladdr##1##2{\begingroup
    \def~{\char`\~}%
    \@ifnotempty{##2}{\nobreak\indent\urladdrname
      \@ifnotempty{##1}{, \ignorespaces##1\unskip}\/:\space
      \ttfamily##2\par}\endgroup}%
  \addresses
  \endgroup
}
\makeatother

\newtheorem{theorem}{Theorem}[section]
\newtheorem*{maintheorem}{Main Theorem}
\newtheorem{lemma}[theorem]{Lemma}
\newtheorem{corollary}[theorem]{Corollary}
\newtheorem{proposition}[theorem]{Proposition}
\numberwithin{equation}{section}
\newcommand{\EE}{\mathcal E}

\title{F\"uredi's Conjecture and a Sharp Strengthening}

\author{Zihao Huang}
\author{Suijie Wang}
\address{School Of Mathematics, Hunan University, Changsha 410082, Hunan, P. R. China}
\email{zihaoh@hnu.edu.cn {\normalfont (Zihao Huang)}\qquad wangsuijie@hnu.edu.cn {\normalfont (Suijie Wang)}}
\thanks{Corresponding author: Suijie Wang.}

\date{}
\subjclass[2020]{Primary 05D05; Secondary 15A75}
\keywords{F\"uredi's conjecture, Bollob\'as set-pairs inequality,
intersecting subspaces, exterior algebra, graded ideals}

\begin{document}

\begin{abstract}
We prove a strengthening of F\"uredi's conjecture on strong Bollob\'as
$t$-systems. For nonnegative integers $t$ and $s$, a family of $m\ge2$ pairs
of finite sets $(A_i,B_i)$ satisfying $|A_i\cap B_i|\le t$ and
$|A_i\cap B_j|>t+s$ for all $i\ne j$ satisfies
\[
 \sum_{i=1}^{m}
 \frac{\binom{|B_i|-t+s}{s}}
      {\binom{|A_i|+|B_i|-2t}{|A_i|-t}}\le1.
\]
When $s=0$, this is precisely F\"uredi's conjectured weighted inequality.
The proof is based on a local growth inequality for graded exterior ideals,
a basis-exchange construction, and a common-section reduction. The same
method gives the corresponding inequality for subspaces over arbitrary
fields. We also determine a best possible upper bound for the F\"uredi
weight sum, construct equality examples with an arbitrary prescribed number
of pairs, and derive further inequalities from two-sided basis exchanges.
\end{abstract}

\maketitle
\markboth{Zihao Huang And Suijie Wang}{F\"uredi's Conjecture}

\section{Introduction}

Bollob\'as' set-pairs inequality is one of the basic intersection
inequalities in extremal set theory. F\"uredi~\cite{Furedi} proved the corresponding uniform bound under the
weaker skew cross-intersection condition and conjectured the weighted inequality
for strong
Bollob\'as $t$-systems of nonuniform sizes. Heged\"us~\cite{Heg} later
proved the conjectured inequality under the additional assumption that
$|A_i|+|B_i|$ is independent of $i$. We remove this restriction and prove a stronger inequality that records the
gap between the diagonal and cross-intersection conditions. The same statement
holds for subspaces over arbitrary fields.

\begin{maintheorem}
Let $t,s$ be nonnegative integers and let $m\ge2$. Suppose that one of the
following holds.

\begin{enumerate}
\item[\textup{(i)}] The pairs $(A_i,B_i)$, $1\le i\le m$, are finite sets
satisfying
\[
 |A_i\cap B_i|\le t,\qquad |A_i\cap B_j|>t+s\quad(i\ne j),
\]
and $a_i=|A_i|$, $b_i=|B_i|$.

\item[\textup{(ii)}] The pairs $(A_i,B_i)$, $1\le i\le m$, are subspaces of
a finite-dimensional vector space over an arbitrary field satisfying
\[
 \dim(A_i\cap B_i)\le t,\qquad
 \dim(A_i\cap B_j)>t+s\quad(i\ne j),
\]
and $a_i=\dim A_i$, $b_i=\dim B_i$.
\end{enumerate}

Then, in either case,
\begin{equation}\label{eq:main-bound}
 \sum_{i=1}^{m}
 \frac{\binom{b_i-t+s}{s}}
      {\binom{a_i+b_i-2t}{a_i-t}}\le1.
\end{equation}
\end{maintheorem}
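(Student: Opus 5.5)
We reduce case (i) to case (ii) and then prove (ii) with an exterior-algebra argument organized around the numbers $k_i=a_i-t$ and $\ell_i=b_i-t$. For the reduction, take the coordinate space $\mathbb F^{N}$ indexed by $\bigcup_i(A_i\cup B_i)$ and replace each set by the span of its coordinate vectors. Dimensions of intersections equal cardinalities, so (i) follows from (ii). Over a finite field we first pass to an infinite extension field. Intersection dimensions are unchanged by field extension, and genericity arguments then become available.

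\emph{Step 1 (common-section reduction).} We want to remove $t$. Choose a generic subspace $W$ of codimension $t$ in the ambient space $V$ and replace $A_i,B_i$ by $A_i'=A_i\cap W$ and $B_i'$ obtained by projecting $B_i$ along a generic complement. More naively, we aim for pairs $(A_i',B_i')$ with $\dim A_i'=k_i$, $\dim B_i'=\ell_i+s$ (or $\ell_i$), $A_i'\cap B_i'=0$ and $\dim(A_i'\cap B_j')>s$ for $i\ne j$. The idea is to quotient by a suitable $t$-dimensional "common section" chosen generically inside each pair. The delicate point is that one subspace cannot lie in all $A_i\cap B_i$. So I would instead work with a generic linear map $\pi\colon V\to V'$ and truncate each $A_i$ and $B_i$ by intersecting with generic hyperplanes $t$ times. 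Each such cut lowers $\dim A_i$, $\dim B_i$ and all intersection dimensions by at most one, and the diagonal ones by exactly one while they are positive. After this step we are in the case $t=0$ with cross intersections of dimension $>s$. I expect the genericity bookkeeping here to be routine but fiddly.

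\emph{Step 2 (exterior algebra, $t=0$).} For each $i$ pick decomposable vectors $\alpha_i=\wedge A_i\in\Lambda^{k_i}V$ and $\beta_i=\wedge B_i\in\Lambda^{\ell_i}V$. Then $\alpha_i\wedge\beta_i\ne0$ while $\alpha_i\wedge\beta_j=0$ for $i\ne j$. The intersection condition gives more: $\alpha_i\wedge\gamma=0$ for every $\gamma$ obtained from $\beta_j$ by replacing up to $s$ of its basis vectors by arbitrary vectors. This is the basis-exchange construction. It produces, for each $j$, a whole space of vectors $\beta_j\lrcorner$-type "shadows" of dimension at least $\binom{\ell_j+s}{s}$ in the appropriate graded pieces. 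These all annihilate $\alpha_i$ for $i\neq j$ but not $\alpha_j$, and this is the source of the numerator weight.

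\emph{Step 3 (local growth inequality).} Consider the graded ideal $I$ generated by the $\alpha_i$ in $\Lambda V$, ordered by increasing $k_i+\ell_i$. Processing pairs one at a time, each new pair adds, in degree $k_j+\ell_j$, at least $\binom{\ell_j+s}{s}$ new independent elements relative to the ideal generated so far. The local growth inequality for graded exterior ideals, a Kruskal--Katona / Lovász-type normalized shadow bound in which the ratio $\dim I_d/\binom{n}{d}$ is nondecreasing in $d$, converts these increments into fractions $\binom{\ell_j+s}{s}/\binom{k_j+\ell_j}{k_j}$ of a total budget of $1$. Summing gives \eqref{eq:main-bound}.

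The main obstacle is Step 3. The nonuniform sizes mean contributions land in different degrees, and these must be compared through a monotonicity statement for normalized dimensions of exterior ideals. Hegedüs's uniform case avoids exactly this. I would first try proving the growth inequality by generic initial ideals (strongly stable monomial ideals in $\Lambda V$) and then a shifting/counting argument on squarefree monomials.
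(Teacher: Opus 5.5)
Your outer reductions are sound and match the paper's: coordinate subspaces for (i), scalar extension to an infinite field, and, in its final form, Step~1 is the common section of Lemma~\ref{lem:common-section}. State it this way: a generic hyperplane lowers the dimension of \emph{every} nonzero member of $\{A_i,B_i,A_i\cap B_j\}$ by exactly one. That is what gives $\dim A_i'=a_i-t$ and $\dim B_i'=b_i-t$. Drop the variants with projected $B_i$ or $\dim B_i'=\ell_i+s$.

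The genuine gap is Steps~2--3, the nonuniform case $t=0$. This is the entire difficulty of F\"uredi's conjecture, and the mechanism you sketch does not work. Monotonicity of $\dim I_d/\binom nd$ is only $\lambda_n(I,d)\ge0$, which is the pair-blind case $m=0$ of Theorem~\ref{thm:sandwich}. The increment claim it is meant to convert is false. Take $t=s=0$ and $A_1=\{1,2\}$, $B_1=\{3\}$, $A_2=\{3\}$, $B_2=\{1\}$, realized as coordinate subspaces of $\mathbb F^n$. Ordering by $k+\ell$ puts pair~2 first. Then $e_1\wedge e_2$ adds in degree $3$ only the $n-3$ monomials $e_1\wedge e_2\wedge e_x$ with $x\ge4$. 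That is nothing at all when $n=3$, and a normalized amount $(n-3)/\binom n3\le\frac{1}{4}$ for every $n$, while the weight to be certified is $1/\binom{3}{2}=\frac{1}{3}$. A count in a single degree per pair is the uniform Lov\'asz-type mechanism, which needs every pair projected to the same dimension $k+\ell$. When the sums $k_j+\ell_j$ differ, no such projection exists. Your plan via generic initial ideals does not indicate how the pair conditions would force the increments. The claimed space of shadows of dimension $\binom{\ell_j+s}{s}$ is likewise asserted rather than derived.

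What is missing is Theorem~\ref{thm:sandwich} and the way it is used. The paper keeps $s$ out of the algebra. Each pair is replaced by the $\binom{a_i}{s}$ pairs $\bigl(\operatorname{span}(E_i\setminus S),\operatorname{span}(F_i\cup S)\bigr)$, which still form a Bollob\'as system because $\dim(A_i\cap B_j)>s$. The identity $\binom{a}{s}/\binom{a+b}{a-s}=\binom{b+s}{s}/\binom{a+b}{a}$ then produces the numerator.

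For $t=s=0$ the paper takes $M=\bigcap_i(B_i)$. It contains every $u_i\wedge(B_i)$ and satisfies $M_0=0$, $M_n=\Lambda^nV$, so $\sum_{k=1}^n\lambda_n(M,k)/(k\binom nk)=1$. The upper inclusion $M\subseteq(B_j)$, which the ideal generated by the $u_i$ lacks, supplies the test functionals $x\mapsto x\wedge v_{B_j}$. The key estimate is the degreewise bound $\lambda_n(M,k)\ge\sum_ib_i\binom{n-a_i-b_i}{k-a_i-1}$, proved by induction on $(n,m)$:
\begin{itemize}
\item If every $a_i+b_i<n$, project along a vector $e\notin\bigcup_i(A_i+B_i)$, split $\dim M_k=\dim\pi_kM+\dim\pi_{k-1}(M:e)$, and use Pascal's rule.
\item If some $a_i+b_i=n$, adjoin the $u_i$ with $a_i+b_i=n$ and $a_i=k-1$. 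These are independent modulo $M_{k-1}$ because $u_i\wedge v_{B_j}\ne0$ exactly when $i=j$. This gains $n-k+1$ for each such $i$ and leaves fewer pairs for the induction.
\end{itemize}
Each pair's contribution is thereby spread over the degrees $a_i+1\le k\le n-b_i+1$ instead of sitting in degree $a_i+b_i$. Lemma~\ref{lem:sum} shows that it totals exactly $1/\binom{a_i+b_i}{a_i}$ whatever $n$ is; this is what makes nonuniform sizes harmless.
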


For finite sets, the case $s=0$ is precisely F\"uredi's conjectured
weighted inequality, while the subspace statement gives its unrestricted
analogue over arbitrary fields. For $s>0$, the stronger requirement $|A_i\cap B_j|\ge t+s+1$
produces the additional factor $\binom{b_i-t+s}{s}$ in the main inequality.
In the set setting, we call a family satisfying the hypotheses above a
\emph{strong Bollob\'as $(t,s)$-system}; when $s=0$, this is a strong
Bollob\'as $t$-system in the terminology of Heged\"us~\cite[p.~244]{Heg}.
The Main Theorem remains valid after interchanging $A_i$ and $B_i$.
Corollary~\ref{cor:sharp-constant} determines the optimal bound for the
original reciprocal-binomial sum and characterizes equality when $s\ge1$.

The principal algebraic ingredient is a local growth inequality for graded
exterior ideals satisfying inclusions determined by the subspace pairs. Its
lower bound contains an explicit contribution from each pair, and summation
over the exterior degrees produces the weighted inequality. The proof uses a
projection--colon recursion, while basis exchange gives the strengthened
inequalities for $s>0$ and a common section reduces the general case to
$t=0$. Extension of scalars treats finite fields.

For $t=s=0$, the set statement reduces to the classical Bollob\'as
set-pairs inequality~\cite{Bollobas}, which admits a standard proof by
counting permutations. Frankl~\cite{Frankl} established the fixed-size
bound under the weaker skew condition $A_i\cap B_j\ne\varnothing$ for
$i<j$. For general $t$, when $s=0$ and $|A_i|=a$, $|B_i|=b$ for every
$i$, the Main Theorem recovers F\"uredi's bound
$m\le\binom{a+b-2t}{a-t}$. Thus, already for $s=0$, the Main Theorem removes the constant-sum
hypothesis in the previously known weighted result of Heged\"us~\cite{Heg},
while $s>0$ gives a further strengthening. The reciprocal binomial weights
appearing here are of the same type as those in the classical
Lubell--Yamamoto--Meshalkin inequality~\cite{Lubell,Meshalkin,Yamamoto}.

A parallel line of work concerns subspace analogues of Bollob\'as-type
inequalities, where the exterior-algebra method goes back to
Lov\'asz~\cite{Lovasz}; see also Babai and Frankl~\cite{BF}. Scott and
Wilmer~\cite[Theorem~4.5]{SW} proved a weighted skew inequality for real
subspaces under the opposite monotonicity assumptions
\[
 a_1\le a_2\le\cdots\le a_m,
 \qquad b_1\ge b_2\ge\cdots\ge b_m.
\]
Their Corollary~4.9 gives the symmetric case when $a_i+b_i$ is constant,
since a simultaneous relabeling then supplies the required ordering.
Symmetric cross-intersections alone do not imply these monotonicity
assumptions. Over $\mathbb R$, the unrestricted case $t=s=0$ was formulated
as Conjecture~2, Eq.~(11), in Heged\"us~\cite{HegPreprint}, and its general
$t$ counterpart as Conjecture~3 there; Theorem~3.1 of that paper proves the
equivalence of the two assertions by taking a common section.
Proposition~\ref{thm:subspaces} proves the case $t=s=0$ over arbitrary
fields, and the Main Theorem extends it to arbitrary $t$ and $s$ over every
field. Under the symmetric cross-intersection condition, the Main Theorem
requires neither a constant dimension sum nor an ordering of the dimensions.

Zhu~\cite{Zhu} studied bounded-size set-pair families satisfying
$|A_i|\le r$, $|B_i|\le u$, $|A_i\cap B_i|\le t$, and
$|A_i\cap B_j|>\ell$ for $i<j$, where $\ell\ge t$. The Main Theorem
concerns instead a nonuniform weighted inequality, its arbitrary-field
subspace analogue, and the two-sided basis-exchange inequalities of
Section~\ref{subsec:two-sided} under the symmetric cross-intersection
condition. Tuza~\cite{Tuza87} considered a different relaxation of the
cross-intersection condition. Further variants with prescribed intersection
conditions and $t$-intersecting variants appear in
\cite{Talbot,KKK}, weighted and nonuniform skew variants in
\cite{HF,WLLF,Yue}, and multipart or related subspace variants in
\cite{Alon,LFW,YuEtAl}; see also Tuza~\cite{TuzaSurvey} and Frankl and
Tokushige~\cite[Chapter~26]{FT}.

The paper is organized as follows. Section~\ref{sec:exterior} gives the
exterior-algebra preliminaries. Section~\ref{sec:local} proves the local
growth inequality, and Section~\ref{sec:main-proof} derives the Main Theorem
using basis exchange and a common-section reduction. Section~\ref{sec:consequences}
treats the sharp bound for the F\"uredi weight sum, equality constructions,
and further exchange inequalities.

\section{Exterior algebra preliminaries}\label{sec:exterior}

Let $V$ be an $n$-dimensional vector space over an arbitrary field $\mathbb F$.
The exterior algebra of $V$ is
\[
 \EE(V)=\bigoplus_{j=0}^{n}\Lambda^jV.
\]
If $U\subseteq V$ has dimension $d$, then $\dim\Lambda^dU=1$, and the
exterior product of any basis of $U$ is a nonzero generator of
$\Lambda^dU$.
For $u\in\EE(V)$ and $A\subseteq\EE(V)$, write
$
 u\wedge A=\{u\wedge a:a\in A\}.
$

An ideal $M\subseteq\EE(V)$ is \emph{graded} if
\[
 M=\bigoplus_{j=0}^{n}M_j,\qquad M_j=M\cap\Lambda^jV.
\]
We set $\Lambda^jV=0$ and $M_j=0$ for $j\notin\{0,\ldots,n\}$.
For a graded ideal $M$ and a vector $e\in V$, define the colon ideal
\[
 M:e=\{x\in\EE(V):e\wedge x\in M\}.
\]
It is again a graded ideal.
For a subspace $S\subseteq V$, the ideal of $\EE(V)$ generated by $S$ is
\[
(S):=\operatorname{span}_{\mathbb F}
\{\,s\wedge a:s\in S,\ a\in \EE(V)\,\}.
\]

\begin{lemma}\label{lem:annihilator}
Let $S\subseteq V$ be a subspace of dimension $d$. Choose a basis
$s_1,\ldots,s_d$ of $S$ and put
$v_S=s_1\wedge\cdots\wedge s_d$, with $v_S=1$ if $d=0$. Then
\begin{equation}\label{eq:ann}
 \{x\in\EE(V):x\wedge v_S=0\}=(S).
\end{equation}
\end{lemma}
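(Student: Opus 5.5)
The plan is to prove the two inclusions separately, working in a basis of $V$ adapted to $S$.

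The inclusion $(S)\subseteq\{x:x\wedge v_S=0\}$ is immediate. It suffices to check it on spanning elements $s\wedge a$ with $s\in S$ and $a\in\EE(V)$. Write $s=\sum_k c_k s_k$. Then $s\wedge a\wedge v_S=\pm\, a\wedge s\wedge v_S$ up to the sign needed to move homogeneous components of $a$ past $s$; we may treat $a$ homogeneous by linearity. Each term $s_k\wedge s_1\wedge\cdots\wedge s_d$ has a repeated vector and so vanishes. For $d=0$ we have $S=0$, so $(S)=0$, while $v_S=1$ and $x\wedge 1=x$; both sides are therefore $0$ and the identity is trivial. From now on assume $d\ge1$.

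For the reverse inclusion, extend $s_1,\dots,s_d$ to a basis $s_1,\dots,s_d,e_1,\dots,e_{n-d}$ of $V$. The standard monomials $s_I\wedge e_J$, with $I\subseteq[d]$ and $J\subseteq[n-d]$ written in increasing order, form a basis of $\EE(V)$. Split $x=x_0+x_1$, where $x_1$ is the part supported on monomials with $I\neq\varnothing$ and $x_0=\sum_J c_J e_J$ uses only the $e$'s. Every monomial in $x_1$ begins with some $s_k$, so $x_1\in(S)$, and by the first inclusion $x_1\wedge v_S=0$. Hence $x\wedge v_S=x_0\wedge v_S=\sum_J c_J\, e_J\wedge s_1\wedge\cdots\wedge s_d$.

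The vectors $e_J\wedge s_{[d]}$ for distinct $J$ are, up to sign, distinct standard basis monomials of $\EE(V)$, so they are linearly independent. Therefore $x\wedge v_S=0$ forces $c_J=0$ for all $J$, i.e.\ $x_0=0$, and so $x=x_1\in(S)$. This argument uses nothing field-specific: it only needs that the standard monomials form a basis over an arbitrary field $\mathbb F$, and no characteristic assumptions enter.

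The only point needing any care is the linear independence of the products $e_J\wedge v_S$, which is the standard basis theorem for $\Lambda^\bullet V$. Two smaller details should be stated explicitly: that $(S)$ is spanned by the monomials containing at least one $s_k$, and that $v_S$ depends on the chosen basis only up to a nonzero scalar, so the left side of \eqref{eq:ann} is independent of that choice.
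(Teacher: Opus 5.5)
Your proof is correct and follows essentially the same route as the paper: you choose a basis adapted to $S$, note that monomials containing some $s_k$ lie in $(S)$ and are killed by $v_S$, and use that wedging with $v_S$ sends the remaining monomials $e_J$ to distinct basis monomials up to sign. Your direct check of $(S)\subseteq\{x:x\wedge v_S=0\}$ on the generators $s\wedge a$ is a slightly more careful version of the paper's unproved assertion that these monomials form a basis of $(S)$.
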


\begin{proof}
Extend $s_1,\ldots,s_d$ to a basis $s_1,\ldots,s_n$ of $V$.
The products
\[
 s_{i_1}\wedge\cdots\wedge s_{i_j},\qquad i_1<\cdots<i_j,
\]
together with $1$ in degree zero, form a basis of $\EE(V)$; we call them
exterior basis monomials. The exterior basis monomials containing at least
one of $s_1,\ldots,s_d$ form a basis of $(S)$, and each has zero wedge
product with $v_S$. Every remaining basis monomial wedges with $v_S$ to a
nonzero scalar multiple of a basis monomial, and distinct remaining
monomials yield distinct basis monomials. This proves~\eqref{eq:ann}.
\end{proof}

\begin{lemma}\label{lem:colon}
Let $S\subseteq V$ be a subspace and let $e\in V\setminus S$. Then
\begin{equation}\label{eq:colon-linear}
 (S):e=(S+\mathbb F e).
\end{equation}
\end{lemma}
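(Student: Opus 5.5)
We prove the two inclusions in \eqref{eq:colon-linear}, using Lemma~\ref{lem:annihilator} to turn ideal membership into a wedge condition. Let $d=\dim S$, choose a basis $s_1,\ldots,s_d$ of $S$, and put $v_S=s_1\wedge\cdots\wedge s_d$. Since $e\notin S$, the vectors $s_1,\ldots,s_d,e$ form a basis of $S'=S+\mathbb F e$. Hence $v_{S'}=v_S\wedge e$ is a valid generator of $\Lambda^{d+1}S'$, and Lemma~\ref{lem:annihilator} applies to both $S$ and $S'$.

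For the inclusion $(S+\mathbb F e)\subseteq (S):e$, it suffices to treat spanning elements $y=u\wedge a$ with $u\in S'$ and $a\in\EE(V)$. Write $u=s+\lambda e$ with $s\in S$. Then
\[
e\wedge y=e\wedge s\wedge a+\lambda\, e\wedge e\wedge a=-\,s\wedge(e\wedge a),
\]
because $e\wedge e=0$ and $e\wedge s=-s\wedge e$. The right-hand side lies in $(S)$, so $y\in(S):e$. Since $(S):e$ is a subspace, the whole of $(S')$ is contained in it.

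For the reverse inclusion, let $x\in(S):e$, so that $e\wedge x\in(S)$. By Lemma~\ref{lem:annihilator} this gives $(e\wedge x)\wedge v_S=0$. Moving $e$ past the factors,
\[
x\wedge e\wedge v_S=\pm\,x\wedge v_S\wedge e=\pm\,x\wedge v_{S'},
\]
so $x\wedge v_{S'}=0$. Applying Lemma~\ref{lem:annihilator} to $S'$ then gives $x\in(S')$, which completes the proof.

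No step is a real obstacle. The only points needing care are the sign bookkeeping, which is harmless because we only test vanishing, and the use of the hypothesis $e\notin S$. That hypothesis is exactly what makes $v_S\wedge e$ a nonzero generator of $\Lambda^{d+1}S'$, so that Lemma~\ref{lem:annihilator} can be invoked for $S'$. If instead $e\in S$, then $(S):e=\EE(V)$ and the identity fails, which is why the hypothesis is needed.
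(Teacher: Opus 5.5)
Your proof is correct, but it takes a different route from the paper. The paper works in coordinates. It extends a basis of $S$ to a basis of $V$ containing $e$, and observes that $e\wedge x\in(S)$ exactly when every exterior basis monomial occurring in $x$ contains some $s_i$ or contains $e$. These are precisely the monomials spanning $(S+\mathbb F e)$, so in effect the paper repeats the monomial analysis from the proof of Lemma~\ref{lem:annihilator}.

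You instead use Lemma~\ref{lem:annihilator} as a black box twice, turning both memberships into annihilation conditions and linking them through $v_{S+\mathbb F e}=v_S\wedge e$. The hypothesis $e\notin S$ then enters in exactly one place: it makes $s_1,\ldots,s_d,e$ a basis of $S+\mathbb F e$. Your argument is basis-free and reuses an established lemma instead of re-deriving a monomial criterion. The paper's argument is shorter and makes the monomial picture explicit.

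Two small remarks. First, your reverse-inclusion chain consists of equivalences at every step, so it already yields both inclusions; the separate computation for $(S+\mathbb F e)\subseteq(S):e$ is correct but not needed. Second, for non-homogeneous $x$ the element $e\wedge x$ is not globally $\pm\,x\wedge e$, because the sign $(-1)^j$ depends on the degree $j$ of each component. The step from $(e\wedge x)\wedge v_S=0$ to $x\wedge e\wedge v_S=0$ holds because the homogeneous components lie in distinct degrees and therefore vanish separately. Alternatively, reduce to homogeneous $x$, since both ideals are graded. Your comment that only vanishing matters is right, and this is the precise reason.
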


\begin{proof}
Choose a basis $s_1,\ldots,s_d$ of $S$ and extend it to a basis of
$V$ containing $e$. The
condition $e\wedge x\in(S)$ holds precisely when every monomial
occurring in $x$ either contains one of $s_1,\ldots,s_d$ or contains
$e$. These are exactly the monomials spanning $(S+\mathbb F e)$,
which proves~\eqref{eq:colon-linear}.
\end{proof}

For a graded ideal $M\subseteq\EE(V)$ and an integer $k$, put
\[
 \lambda_n(M,k)=k\dim M_k-(n-k+1)\dim M_{k-1}.
\]
For $1\le k\le n$, the identity
$k\binom nk=(n-k+1)\binom n{k-1}$ gives
\begin{equation}\label{eq:normalized-difference}
 \frac{\lambda_n(M,k)}{k\binom nk}
 =\frac{\dim M_k}{\binom nk}
  -\frac{\dim M_{k-1}}{\binom n{k-1}}.
\end{equation}
Here $\binom nk=\dim\Lambda^kV$, so the right-hand side is
the difference of the normalized dimensions in consecutive degrees.

For a linear map $\pi:V\to W$, we use the same symbol for the
induced graded algebra homomorphism $\pi:\EE(V)\to\EE(W)$.
It is defined by $\pi(1)=1$ and
\[
 \pi(v_1\wedge\cdots\wedge v_j)
 =\pi(v_1)\wedge\cdots\wedge\pi(v_j)
 \qquad\forall\,v_1,\ldots,v_j\in V,
\]
and extended linearly. For each $j$, $\pi_j:=\left.\pi\right|_{\Lambda^jV}$ is a linear map from $\Lambda^jV$ to $\Lambda^jW$.
Given a graded subspace $X\subseteq\EE(V)$, write
$\pi X=\pi(X)$ and $\pi_jX=\pi_j(X_j)$ for convenience.

\begin{lemma}\label{lem:projection}
Let $e\in V$ be nonzero and write $V=W\oplus\mathbb F e$. Let $\pi:\EE(V)\to\EE(W)$ be the graded algebra homomorphism induced by the projection $\pi:V\to W$ defined by $\pi(w+ae)=w$ for $w\in W$ and $a\in\mathbb F$. Let $M\subseteq\EE(V)$ be a graded ideal. Then $\pi M$ and $\pi(M:e)$ are graded ideals of $\EE(W)$ with $\pi M\subseteq\pi(M:e)$. Moreover, for every integer $k$,
\begin{equation}\label{eq:hilbert-split}
 \dim M_k=\dim\pi_kM+\dim\pi_{k-1}(M:e),
\end{equation}
and consequently
\begin{equation}\label{eq:flux-split}
 \lambda_n(M,k)\ge \lambda_{n-1}(\pi M,k)
 +\lambda_{n-1}\bigl(\pi(M:e),k-1\bigr).
\end{equation}
\end{lemma}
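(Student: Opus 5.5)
The argument rests on a single linear-algebra fact: the degree-$k$ part of $\EE(V)$ splits as
\[
 \Lambda^kV=\Lambda^kW\oplus\bigl(e\wedge\Lambda^{k-1}W\bigr).
\]
Everything else follows by restricting $\pi_k$ to $M_k$ and identifying its kernel with the colon ideal.

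\emph{Ideal properties.} Since $\pi:\EE(V)\to\EE(W)$ is a surjective graded algebra homomorphism, it maps graded ideals to graded ideals.
\begin{itemize}
\item[$\circ$] If $y\in\EE(W)$ and $x\in M$, lift $y$ to itself (viewing $\EE(W)\subseteq\EE(V)$). Then $y\wedge\pi(x)=\pi(y\wedge x)\in\pi M$.
\item[$\circ$] Gradedness of $\pi M$ holds because $\pi$ preserves degree and $M$ is graded.
\item[$\circ$] The same reasoning applies to $\pi(M:e)$, because $M:e$ is a graded ideal.
\end{itemize}
For the inclusion $\pi M\subseteq\pi(M:e)$, note that $M\subseteq M:e$: if $x\in M$ then $e\wedge x\in M$, since $M$ is an ideal. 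Applying $\pi$ gives the inclusion.

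\emph{Dimension identity.} Restrict $\pi_k$ to $M_k$. Then $\dim M_k=\dim\pi_kM+\dim K_k$, where $K_k=\ker\pi_k\cap M_k=M_k\cap(e\wedge\Lambda^{k-1}W)$.

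I claim the map $\Lambda^{k-1}W\to e\wedge\Lambda^{k-1}W$, $z\mapsto e\wedge z$, restricts to an isomorphism from $\pi_{k-1}(M:e)$ onto $K_k$.
\begin{itemize}
\item[$\circ$] \emph{It lands in $K_k$.} Let $x\in(M:e)_{k-1}$ and write $x=\pi(x)+e\wedge x'$ with $x'\in\Lambda^{k-2}W$. Then $e\wedge\pi(x)=e\wedge x\in M_k$, and it lies in $\ker\pi_k$.
\item[$\circ$] \emph{It is onto $K_k$.} Every element of $K_k$ has the form $e\wedge z$ with $z\in\Lambda^{k-1}W$ and $e\wedge z\in M$. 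So $z\in(M:e)_{k-1}$, and $z=\pi(z)$.
\item[$\circ$] \emph{It is injective.} The map $z\mapsto e\wedge z$ is injective on $\EE(W)$, because $e\notin W$.
\end{itemize}
Hence $\dim K_k=\dim\pi_{k-1}(M:e)$, which is \eqref{eq:hilbert-split}. The boundary cases, where $k$ or $k-1$ lies outside $\{0,\dots,n\}$, are covered by the zero conventions.

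\emph{Inequality for $\lambda_n$.} Write $p_k=\dim\pi_kM$ and $q_k=\dim\pi_k(M:e)$. By \eqref{eq:hilbert-split},
\[
 \lambda_n(M,k)=k(p_k+q_{k-1})-(n-k+1)(p_{k-1}+q_{k-2}).
\]
The two terms on the right of \eqref{eq:flux-split} are
\[
 \lambda_{n-1}(\pi M,k)=kp_k-(n-k)p_{k-1},
\qquad
 \lambda_{n-1}\bigl(\pi(M:e),k-1\bigr)=(k-1)q_{k-1}-(n-k+1)q_{k-2}.
\]
Subtracting these from $\lambda_n(M,k)$ leaves
\[
 q_{k-1}-p_{k-1}.
\]
This difference is nonnegative by the inclusion $\pi M\subseteq\pi(M:e)$ in degree $k-1$, which proves \eqref{eq:flux-split}.

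The only delicate point is the kernel identification. It uses that $\pi$ restricted to $\EE(W)$ is the identity, so that $\pi(x)$ may replace $x$ in the colon condition; the rest is bookkeeping.
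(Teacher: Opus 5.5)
Your proof is correct and follows essentially the same route as the paper: the splitting $\Lambda^kV=\Lambda^kW\oplus(e\wedge\Lambda^{k-1}W)$, the identification of $\ker(\pi_k|_{M_k})$ with $e\wedge\pi_{k-1}(M:e)$, and then the inclusion $\pi_{k-1}M\subseteq\pi_{k-1}(M:e)$ in degree $k-1$. Your explicit computation that the slack in \eqref{eq:flux-split} equals $\dim\pi_{k-1}(M:e)-\dim\pi_{k-1}M$ spells out the bookkeeping that the paper leaves implicit.
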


\begin{proof}
The ideal assertions follow from the surjectivity of $\pi$ and $M\subseteq M:e$. Since $V=W\oplus\mathbb F e$,
\[
 \Lambda^kV=\Lambda^kW\oplus(e\wedge\Lambda^{k-1}W).
\]
The linear map $\pi_k$ is the identity on the first summand and vanishes
on the second. Consequently,
\[
 \ker\bigl(\pi_k|_{M_k}:M_k\to\pi_kM\bigr)
 =M_k\cap(e\wedge\Lambda^{k-1}W).
\]
Multiplication by $e$ restricts to a linear isomorphism
\[
 \pi_{k-1}(M:e)
 \xrightarrow{\ x\mapsto e\wedge x\ }
 M_k\cap(e\wedge\Lambda^{k-1}W).
\]
Indeed, take $x\in (M:e)_{k-1}$ and set $y=\pi_{k-1}x$. Then
$x-y\in e\wedge\Lambda^{k-2}W$, so $e\wedge y=e\wedge x\in M_k$.
Conversely, if $y\in\Lambda^{k-1}W$ and $e\wedge y\in M_k$, then
$y\in M:e$, and hence $y=\pi_{k-1}y\in\pi_{k-1}(M:e)$. Since
multiplication by $e$ is injective on $\Lambda^{k-1}W$, the claim follows. Thus
\[
 0\longrightarrow\pi_{k-1}(M:e)
 \xrightarrow{\ x\mapsto e\wedge x\ }M_k
 \xrightarrow{\ \pi_k\ }\pi_kM\longrightarrow0
\]
is exact. Taking dimensions proves~\eqref{eq:hilbert-split}.
Applying this identity in degrees $k$ and $k-1$, together with
$\pi_{k-1}M\subseteq\pi_{k-1}(M:e)$, gives~\eqref{eq:flux-split}.
\end{proof}

\section{A local growth principle for graded exterior ideals}\label{sec:local}

The following theorem is the principal algebraic estimate used in the
proof of Proposition~\ref{thm:subspaces}.

We use the conventions $\sum_{i\in\varnothing} I_i=0$ and
$\bigcap_{i\in\varnothing} I_i=\EE(V)$. We also set $\binom ab=0$ for
integers $a\ge0$ and $b\notin\{0,\ldots,a\}$.

\begin{theorem}\label{thm:sandwich}
Let $m\ge0$, and let $(A_i,B_i)$, $1\le i\le m$, be pairs of
subspaces of an $n$-dimensional vector space $V$ over an arbitrary
field $\mathbb F$, with positive
dimensions $a_i=\dim A_i$ and $b_i=\dim B_i$, such that
\[
 A_i\cap B_i=\{0\},\qquad A_i\cap B_j\ne\{0\}\quad(i\ne j).
\]
For any choice of nonzero $u_i\in\Lambda^{a_i}A_i$, if a graded ideal
$M\subseteq\EE(V)$ satisfies
\begin{equation}\label{eq:sandwich}
 \sum_{i=1}^m u_i\wedge(B_i)
 \ \subseteq\ M\ \subseteq\ \bigcap_{i=1}^m(B_i),
\end{equation}
then, for $0\le k\le n+1$,
\begin{equation}\label{eq:sandwich-bound}
 \lambda_n(M,k)\ge
 \sum_{i=1}^m b_i\binom{n-a_i-b_i}{k-a_i-1}.
\end{equation}
\end{theorem}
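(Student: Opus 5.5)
We prove \eqref{eq:sandwich-bound} by induction on $n$, using the projection--colon splitting of Lemma~\ref{lem:projection}. The base cases are $m=0$ and small $n$. For $m=0$ the right-hand side is $0$, while $M$ is an arbitrary graded ideal. Here we need $\lambda_n(M,k)\ge0$, i.e.\ by \eqref{eq:normalized-difference} that the normalized dimension $\dim M_k/\binom nk$ is nondecreasing in $k$. We first establish this as a separate claim. It follows from the same induction, since \eqref{eq:flux-split} gives $\lambda_n(M,k)\ge\lambda_{n-1}(\pi M,k)+\lambda_{n-1}(\pi(M:e),k-1)$ and both terms are nonnegative by induction. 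For $n=0$ it is checked directly, remembering that $k$ ranges up to $n+1$ and $M_{n+1}=0$; for $k=n+1$ one needs $\dim M_n\cdot 0$ handled, and indeed $\lambda_n(M,n+1)=(n+1)\cdot0-0\cdot\dim M_n=0$.

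For $m\ge1$, choose a nonzero vector $e$ adapted to one pair, and a complement $W$ with $V=W\oplus\mathbb F e$. We then track what happens to the sandwich \eqref{eq:sandwich} under $\pi$ and under $M\mapsto M:e$.

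\textbf{Colon side.} If $e\notin B_i$, Lemma~\ref{lem:colon} gives $(B_i):e=(B_i+\mathbb F e)$. Moreover $u_i\wedge(B_i)\subseteq M$ implies $u_i\wedge(B_i+\mathbb F e)\subseteq M:e$ provided $e\wedge u_i$ behaves well. Applying $\pi$ then sends the $i$-th pair to $(\pi A_i,\pi(B_i+\mathbb F e))=(\pi A_i,\pi B_i)$ in $W$, with $\dim\pi B_i=b_i$ when $e\notin B_i$.

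\textbf{Projection side.} $\pi M$ satisfies the sandwich for the pairs $(\pi A_i,\pi B_i)$ whenever $e\notin A_i+B_i$. In that case dimensions and the conditions $A_i\cap B_i=0$ persist, though the conditions $A_i\cap B_j\neq0$ may fail after projection.

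The intended bookkeeping follows the binomial recursion. Pascal's rule gives
\[
\binom{n-a-b}{k-a-1}=\binom{n-1-a-b}{k-a-1}+\binom{n-1-a-b}{k-a-2},
\]
which matches the two terms of \eqref{eq:flux-split} (degree $k$ on the projection side, degree $k-1$ on the colon side) when the pair survives with the same $(a_i,b_i)$ in dimension $n-1$. When $e$ lies in $A_i\oplus B_i$, the pair should instead be dropped or degenerate: for instance, $e\in B_i$ makes $e\wedge$ kill $(B_i)$-information. This loses terms, and the loss must be compensated.

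Concretely, the plan is to pick $e=e_1\in B_1$ generic. Pair $1$ then becomes a pair with $b_1-1$ in the colon/projection. We must show that the contribution $b_1\binom{n-a_1-b_1}{k-a_1-1}$ splits as $(b_1-1)\binom{\cdot}{\cdot}$-type terms plus the extra, extracted from the $\lambda$ terms via a refined version of \eqref{eq:flux-split}. Specifically, we use the exact identity $\dim M_k=\dim\pi_kM+\dim\pi_{k-1}(M:e)$ rather than the inequality, retaining the slack $(n-k+1)\bigl(\dim\pi_{k-2}(M:e)-\dim\pi_{k-2}M\bigr)$. The factor $b_i$ suggests instead averaging over a basis of $B_i$, or summing $\lambda$ over $k$ (an Euler-type identity).

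I expect the main obstacle to be exactly this: choosing $e$ so that all pairs $i\ge2$ survive with the cross condition intact under both operations, while pair $1$'s weight $b_1$ is recovered from the slack term. For this I would first try taking $e$ generic in $V$, with $e\notin\bigcup_i(A_i+B_i)$ after extending scalars to an infinite field; dimensions of $M_k$ are unchanged by extension of scalars. The induction is then on $n-\max_i(a_i+b_i)$. The terminal case $n=a_i+b_i$ is handled directly, where $u_i\wedge(B_i)$ already contributes $b_i$ in degree $a_i+1$ via $u_i\wedge B_i\cong B_i$ and Lemma~\ref{lem:annihilator}.
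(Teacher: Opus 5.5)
Your Case 1 is the paper's argument: when all $a_i+b_i<n$, you pick a generic $e\notin\bigcup_i(A_i+B_i)$, project, apply \eqref{eq:flux-split}, and use Pascal's rule. Your treatment of $m=0$ is also fine. Incidentally, the cross conditions do \emph{not} fail under such a projection. Since $e\notin A_i$, $\pi$ is injective on $A_i$, so $\pi(A_i\cap B_j)$ is a nonzero subspace of $\pi(A_i)\cap\pi(B_j)$; you need this for the induction hypothesis.

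The genuine gap is the ``terminal case,'' where $a_i+b_i=n$ for some $i$. No generic projection exists there, since $A_i\oplus B_i=V$. Yet the family may still contain pairs $j$ with $a_j+b_j<n$, whose contributions $b_j\binom{n-a_j-b_j}{k-a_j-1}$ must be proved in the same dimension $n$. Your induction on $n-\max_i(a_i+b_i)$ gives nothing for them, and ``handled directly'' does not address them. Even for the saturated pair alone, knowing that $u_i\wedge B_i$ is a $b_i$-dimensional subspace of $M_{a_i+1}$ does not bound $\lambda_n(M,k)=k\dim M_k-(n-k+1)\dim M_{k-1}$ from below. What is needed is control of $M_{k-1}$, not a lower bound on $M_k$.

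The missing idea is the paper's saturation step. Put $P_k=\{i:a_i+b_i=n,\ a_i=k-1\}$ and $M^+=M+\sum_{i\in P_k}u_i\wedge\EE(V)$.
\begin{itemize}
\item Since $u_i\wedge A_i=0$, you get $u_i\wedge V=u_i\wedge B_i\subseteq M_k$, so $M^+_k=M_k$.
\item The $u_i$ with $i\in P_k$ are linearly independent modulo $M_{k-1}$. To see this, wedge a relation $\sum c_iu_i\in M_{k-1}$ with the product of a basis of $B_{j_0}$. Then use $M\subseteq(B_{j_0})$, Lemma~\ref{lem:annihilator}, $u_i\in(B_{j_0})$ for $i\ne j_0$, and $V=A_{j_0}\oplus B_{j_0}$.
\item This gives the exact identity $\lambda_n(M,k)=\lambda_n(M^+,k)+(n-k+1)|P_k|$. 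Here $n-k+1=b_i$ is precisely the saturated pairs' contribution.
\item By the cross-intersection condition, $u_i\in(B_j)$ for every $j\ne i$. Hence $M^+$ still satisfies \eqref{eq:sandwich} for the unsaturated pairs $Q$.
\end{itemize}
Their bound then follows by induction on the number of pairs at fixed $n$ (the paper uses lexicographic induction on $(n,m)$), or equivalently by running Case 1 on $M^+$ with the pairs in $Q$. Without this augment-and-drop mechanism, the proposal does not prove the theorem.
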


\begin{proof}
We begin by recording a consequence of the cross-intersection condition that will be used repeatedly. For $i\ne j$, choose $0\ne w\in A_i\cap B_j$ and extend $w$ to a basis $w,x_2,\ldots,x_{a_i}$ of $A_i$. Since $u_i\in \Lambda^{a_i}A_i$ is nonzero, there is a nonzero scalar $c$ such that
\[
u_i=cw\wedge x_2\wedge\cdots\wedge x_{a_i}.
\]
As $w\in B_j$, this gives $u_i\in (B_j)$.

Assume first that $\mathbb F$ is infinite. We prove the assertion for all
$k$ simultaneously by induction on the pair $(n,m)$, ordered
lexicographically with $n$ as the first coordinate. Thus the induction
hypothesis is available whenever the ambient dimension decreases, or when
the ambient dimension is unchanged and the number of pairs decreases.
If $n=0$, the positivity of $a_i,b_i$
forces $m=0$, and the assertion is immediate. In every dimension,
both sides of~\eqref{eq:sandwich-bound} vanish for $k=0$ and $k=n+1$.
We may therefore assume $n\ge1$ and fix $1\le k\le n$.
The condition $A_i\cap B_i=\{0\}$ gives $a_i+b_i\le n$.

\smallskip
\noindent\emph{Case 1: $a_i+b_i<n$ for every $i$.}

Choose a nonzero vector $e$ outside all the subspaces $A_i+B_i$.
Such a choice is possible because a finite union of proper linear
subspaces cannot cover $V$ over an infinite field. When $m=0$, take
any nonzero $e$.
Write $V=W\oplus\mathbb F e$, and let $\pi:V\to W$ be the projection
with kernel $\mathbb F e$. It induces a surjective graded algebra homomorphism
$\pi:\EE(V)\to\EE(W)$, denoted by the same symbol. Since
$e\notin A_i+B_i$ for every $i$, the
pairs $\bigl(\pi(A_i),\pi(B_i)\bigr)$ satisfy
\[
 \dim\pi(A_i)=a_i,\qquad \dim\pi(B_i)=b_i,\qquad
 \pi(A_i)\cap\pi(B_i)=\{0\},
\]
and
\[
 \pi(A_i)\cap\pi(B_j)\ne\{0\}\qquad(i\ne j).
\]

Since $\pi\bigl((B_i)\bigr)=\bigl(\pi(B_i)\bigr)$, applying $\pi$ to the lower inclusion
in~\eqref{eq:sandwich} and using Lemma~\ref{lem:projection} gives
\[
 \sum_i\pi(u_i)\wedge\bigl(\pi(B_i)\bigr)
 \subseteq\pi M\subseteq\pi(M:e).
\]
Here $\pi(u_i)$ is a nonzero element of $\Lambda^{a_i}\pi(A_i)$, and
\[
 \pi(M:e)\subseteq\pi\bigl((B_i):e\bigr)
 =\pi\bigl((B_i+\mathbb F e)\bigr)=\bigl(\pi(B_i)\bigr).
\]
Since $\dim W=n-1$, the induction hypothesis applies to $\pi M$ in
degree $k$ and to $\pi(M:e)$ in degree $k-1$.
By~\eqref{eq:flux-split},
\[
 \lambda_n(M,k)
 \ge\sum_i b_i\left[
       \binom{n-1-a_i-b_i}{k-a_i-1}
       +\binom{n-1-a_i-b_i}{k-a_i-2}\right]
 =\sum_i b_i\binom{n-a_i-b_i}{k-a_i-1},
\]
where the last equality is Pascal's identity.

\smallskip
\noindent\emph{Case 2: $a_i+b_i=n$ for at least one $i$.}

First, we construct $M^+$. Put
\[
 P=\{i:a_i+b_i=n\},\qquad
 Q=\{1,\ldots,m\}\setminus P.
\]
Then $P\ne\varnothing$ and $V=A_i\oplus B_i$ for every $i\in P$.
On the right-hand side of~\eqref{eq:sandwich-bound}, precisely those
indices $i\in P$ with $a_i=k-1$ contribute. Set
\[
 P_k=\{i\in P:a_i=k-1\},\qquad
 M^+=M+\sum_{i\in P_k}u_i\wedge\EE(V).
\]
For $i\in P_k$ and $j\in Q$, the cross-intersection condition gives
$u_i\in(B_j)$, and hence $u_i\wedge\EE(V)\subseteq(B_j)$.
Together with~\eqref{eq:sandwich}, this yields
\[
 \sum_{j\in Q}u_j\wedge(B_j)
 \subseteq M^+\subseteq\bigcap_{j\in Q}(B_j).
\]
Since $P\ne\varnothing$, we have $|Q|<m$. Thus the induction hypothesis, in the same ambient dimension $n$ and with
fewer pairs, applies to $M^+$ and gives
\[
 \lambda_n(M^+,k)\ge
 \sum_{j\in Q}b_j\binom{n-a_j-b_j}{k-a_j-1}.
\]

Second, we compare $M$ and $M^+$.
For $i\in P_k$, since $u_i\wedge A_i=0$, the lower inclusion
in~\eqref{eq:sandwich} gives
$u_i\wedge V=u_i\wedge B_i\subseteq M_k$. Therefore
\[
 M^+_{k-1}=M_{k-1}+\operatorname{span}\{u_i:i\in P_k\},
 \qquad M^+_k=M_k.
\]
We show that the images of $u_i$, $i\in P_k$, are linearly independent in $\Lambda^{k-1}V/M_{k-1}$.
Suppose $\sum_{i\in P_k}c_i u_i\in M_{k-1}$, and fix $j_0\in P_k$.
Let $v_{j_0}\in\Lambda^{b_{j_0}}B_{j_0}$ be the exterior product of a basis of
$B_{j_0}$. Since $M\subseteq(B_{j_0})$, Lemma~\ref{lem:annihilator} gives
\[
 0=\left(\sum_{i\in P_k}c_i u_i\right)\wedge v_{j_0}
   =\sum_{i\in P_k}c_i\,u_i\wedge v_{j_0}.
\]
For $i\ne j_0$, the observation at the beginning of the proof gives
$u_i\in(B_{j_0})$, hence $u_i\wedge v_{j_0}=0$. Thus
$0=c_{j_0}u_{j_0}\wedge v_{j_0}$. Since $V=A_{j_0}\oplus B_{j_0}$,
$u_{j_0}\wedge v_{j_0}$ is a nonzero generator of $\Lambda^nV$; hence
$c_{j_0}=0$. As $j_0$ was arbitrary,
\[
 M_{k-1}\cap\operatorname{span}\{u_i:i\in P_k\}=\{0\},
 \qquad
 \dim\operatorname{span}\{u_i:i\in P_k\}=|P_k|.
\]
Hence $\dim M^+_{k-1}=\dim M_{k-1}+|P_k|$. Together with
$M^+_k=M_k$, this gives
\begin{equation}\label{eq:saturated-flux}
 \lambda_n(M,k)=\lambda_n(M^+,k)+(n-k+1)|P_k|.
\end{equation}

Finally, we complete the proof by combining
\eqref{eq:saturated-flux} with the preceding induction bound. For $i\in P_k$,
we have $b_i=n-a_i=n-k+1$, whereas every $i\in P\setminus P_k$ contributes
zero to the right-hand side of~\eqref{eq:sandwich-bound}. Therefore,
\begin{align*}
 \lambda_n(M,k)
 &\ge \sum_{j\in Q}b_j
       \binom{n-a_j-b_j}{k-a_j-1}
       +(n-k+1)|P_k|\\
 &=\sum_{j\in Q}b_j\binom{n-a_j-b_j}{k-a_j-1}
   +\sum_{i\in P}b_i\binom{0}{k-a_i-1}\\
 &=\sum_{i=1}^m b_i\binom{n-a_i-b_i}{k-a_i-1}.
\end{align*}
This proves~\eqref{eq:sandwich-bound} over infinite fields.

It remains to consider the case where $\mathbb F$ is finite. Let
$\overline{\mathbb F}$ be an algebraic closure of $\mathbb F$, and set
$V_{\overline{\mathbb F}}=\overline{\mathbb F}\otimes_{\mathbb F}V$ and
$M_{\overline{\mathbb F}}=\overline{\mathbb F}\otimes_{\mathbb F}M$; for
$S\subseteq V$, write $S_{\overline{\mathbb F}}=\overline{\mathbb F}\otimes_{\mathbb F}S$.
By~\cite[Lemma~2.4]{HF}, scalar extension preserves dimensions and
intersections. Exterior powers and wedge products commute with scalar
extension, and hence the ideal inclusions in~\eqref{eq:sandwich} are
preserved as well; compare also the proof of~\cite[Theorem~1.8]{HF}.
Thus the extended pairs
$(A_{i,\overline{\mathbb F}},B_{i,\overline{\mathbb F}})$ and the graded ideal
$M_{\overline{\mathbb F}}$ satisfy the same hypotheses over the infinite field
$\overline{\mathbb F}$, while
$\dim_{\overline{\mathbb F}}(M_{\overline{\mathbb F}})_j=\dim_{\mathbb F}M_j$.
The infinite-field case therefore gives the required inequality over
$\mathbb F$.
\end{proof}

\section{Proof of the Main Theorem}\label{sec:main-proof}\label{sec:proof}

This section proves the Main Theorem.
First, Theorem~\ref{thm:sandwich} is summed over the exterior degrees to
obtain Proposition~\ref{thm:subspaces}. Next, Lemma~\ref{lem:exchange}, specialized to $(p,q)=(s,0)$, yields the
case $t=0$ for arbitrary $s$.
Finally, Lemma~\ref{lem:common-section}
reduces the general subspace theorem to $t=0$, after which the set theorem
follows from the coordinate-subspace representation.

\subsection{The basic subspace inequality}
\label{subsec:zero-subspaces}

\begin{lemma}\label{lem:sum}
For positive integers $a,b$ with $a+b\le n$,
\[
 \sum_{k=1}^{n}
 \frac{b\binom{n-a-b}{k-a-1}}{k\binom nk}
 =\frac{1}{\binom{a+b}{a}}.
\]
\end{lemma}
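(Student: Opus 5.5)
We need $\sum_k b\binom{n-a-b}{k-a-1}/(k\binom nk) = 1/\binom{a+b}{a}$. The plan is to convert the reciprocal binomial into a Beta integral and then collapse the sum with the binomial theorem.

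The key identity is
\[
 \frac{1}{k\binom nk}=\int_0^1 x^{k-1}(1-x)^{n-k}\,dx ,
\]
since $B(k,n-k+1)=(k-1)!(n-k)!/n!$. Substituting $j=k-a-1$, the nonzero terms have $0\le j\le n-a-b$, so $k$ ranges over $a+1,\dots,n-b+1$, all within $1\le k\le n$ because $b\ge1$. Write $N=n-a-b\ge 0$.

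The sum then becomes
\[
 b\int_0^1 \sum_{j=0}^{N}\binom{N}{j}x^{a+j}(1-x)^{N-j+b-1}\,dx .
\]
Pulling out $x^a(1-x)^{b-1}$ and applying the binomial theorem to $\sum_j\binom Nj x^j(1-x)^{N-j}=1$ reduces this to
\[
 b\int_0^1 x^a(1-x)^{b-1}\,dx = b\,B(a+1,b)=\frac{b\,a!\,(b-1)!}{(a+b)!}=\frac{1}{\binom{a+b}{a}} .
\]
Here $b\ge1$ ensures the exponent $b-1$ is nonnegative.

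If a purely combinatorial argument is preferred, one could instead induct on $n$ via Pascal's rule, mirroring the Case 1 recursion in Theorem~\ref{thm:sandwich}. The Beta-integral route is shorter. The only point needing care is the index bookkeeping: checking that the terms with $k-a-1\notin\{0,\dots,N\}$ vanish under the convention $\binom ab=0$, and that no term with $k=0$ or $k>n$ is needed. There is no real obstacle beyond this.
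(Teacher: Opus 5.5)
Your proof is correct, but it takes a different route from the paper.

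\textbf{The paper's route.} It stays entirely within the integers. Cancelling factorials, it rewrites each summand as
\[
 \frac{\binom{k-1}{a}\binom{n-k}{b-1}}{\binom{a+b}{a}\binom{n}{a+b}} .
\]
It then evaluates $\sum_k\binom{k-1}{a}\binom{n-k}{b-1}=\binom{n}{a+b}$ by sorting the $(a+b)$-subsets of $\{1,\ldots,n\}$ according to their $(a+1)$-st smallest element.

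\textbf{Your route.} You write $1/(k\binom nk)$ as $\int_0^1x^{k-1}(1-x)^{n-k}\,dx$, and the binomial theorem collapses the sum to $b\,B(a+1,b)$. Your index bookkeeping is exactly what is needed: $k$ runs over $a+1,\ldots,n-b+1$, which lies inside $\{1,\ldots,n\}$, and $b-1\ge0$.

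\textbf{How they relate.} The two arguments are discrete and continuous versions of the same order-statistic count. Your collapsed integrand $x^a(1-x)^{b-1}$ is, up to normalization, the density of the $(a+1)$-st smallest of $a+b$ independent uniform points. The paper's termwise identity exhibits the summands as $\binom{a+b}{a}^{-1}$ times a probability distribution in $k$.

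\textbf{What each buys.} In your version you do not have to guess the termwise factorial identity in advance; the sum collapses mechanically, and independence from $n$ is visible at once. The paper's version is purely finitary and bijective, uses no analysis, and gives each term a combinatorial meaning.

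Your side remark about inducting on $n$ via Pascal's rule also works. It rests on
\[
 \frac{1}{k\binom nk}=\frac{1}{k\binom{n+1}{k}}+\frac{1}{(k+1)\binom{n+1}{k+1}},
\]
which shows the sum is unchanged from $n$ to $n+1$, with base case $n=a+b$. It is not needed for your argument.
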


\begin{proof}
For $a+1\le k\le n-b+1$, cancellation of factorials gives
\[
 \frac{b\binom{n-a-b}{k-a-1}}{k\binom nk}
 =\frac{\binom{k-1}{a}\binom{n-k}{b-1}}
        {\binom{a+b}{a}\binom n{a+b}}.
\]
For all other $k\in\{1,\ldots,n\}$, both sides are zero. The sum
of the numerators on the right is $\binom n{a+b}$: an $(a+b)$-element
subset of $\{1,\ldots,n\}$ whose $(a+1)$-st smallest element is $k$
is obtained by choosing $a$ elements below $k$ and $b-1$ above it.
Summing over $k$ proves the identity.
\end{proof}

We first deduce the basic subspace inequality from
Theorem~\ref{thm:sandwich}.

\begin{proposition}\label{thm:subspaces}
Let $(A_i,B_i)$, $1\le i\le m$, be pairs of nonzero subspaces of a
finite-dimensional vector space over an arbitrary field.
Set $a_i=\dim A_i$ and
$b_i=\dim B_i$. If
\[
 A_i\cap B_i=\{0\},\qquad A_i\cap B_j\ne\{0\}\quad(i\ne j),
\]
then
\[
 \sum_{i=1}^{m}\frac{1}{\binom{a_i+b_i}{a_i}}\le1.
\]
\end{proposition}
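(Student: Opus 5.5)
The plan is to apply Theorem~\ref{thm:sandwich} to one specific graded ideal and then sum the local growth inequality over all exterior degrees. Let $V$ be the ambient space, $n=\dim V$, and choose nonzero $u_i\in\Lambda^{a_i}A_i$. We take
\[
 M=\sum_{i=1}^m u_i\wedge(B_i).
\]
This is a graded ideal, since it is a sum of ideals generated by homogeneous elements. The lower inclusion in~\eqref{eq:sandwich} holds trivially. For the upper inclusion, note that $u_i\wedge(B_i)\subseteq(B_i)$ because $(B_i)$ is an ideal. For $j\ne i$, the cross-intersection condition gives $u_i\in(B_j)$, as observed at the start of the proof of Theorem~\ref{thm:sandwich}, so $u_i\wedge(B_i)\subseteq(B_j)$ as well. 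Hence $M\subseteq\bigcap_j(B_j)$, and the hypotheses of Theorem~\ref{thm:sandwich} are met. The condition $A_i\cap B_i=\{0\}$ gives $a_i+b_i\le n$, which Lemma~\ref{lem:sum} requires.

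Next we divide~\eqref{eq:sandwich-bound} by $k\binom nk$ for $1\le k\le n$ and sum over $k$. By~\eqref{eq:normalized-difference}, the left-hand side telescopes:
\[
 \sum_{k=1}^n\frac{\lambda_n(M,k)}{k\binom nk}
 =\frac{\dim M_n}{\binom nn}-\frac{\dim M_0}{\binom n0}=\dim M_n-\dim M_0 .
\]
By Lemma~\ref{lem:sum}, the right-hand side sums to $\sum_i 1/\binom{a_i+b_i}{a_i}$. It therefore suffices to show $\dim M_n-\dim M_0\le 1$.

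This last bound is the only point that needs care. Since every $b_i\ge1$ and $M\subseteq(B_1)$, the ideal $M$ has no nonzero degree-zero part, so $\dim M_0=0$. Also $\dim M_n\le\dim\Lambda^nV=1$. If $m=0$ the inequality is trivial. Otherwise the conclusion is
\[
 \sum_{i=1}^m\frac{1}{\binom{a_i+b_i}{a_i}}\le\dim M_n\le1.
\]
One should also check that the degrees $k=0$ and $k=n+1$ are harmless. They do not appear in the telescoping sum, and Lemma~\ref{lem:sum} already sums exactly over $1\le k\le n$.
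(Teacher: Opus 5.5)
Your proof is correct and follows the paper's argument: apply Theorem~\ref{thm:sandwich}, telescope via \eqref{eq:normalized-difference} over $1\le k\le n$, and evaluate the right-hand side with Lemma~\ref{lem:sum}. The only difference is the choice of ideal. You take the lower end $M=\sum_i u_i\wedge(B_i)$ of the sandwich and use $\dim M_n\le 1$. The paper takes the upper end $M=\bigcap_i(B_i)$ and uses $M_n=\Lambda^nV$, so the telescoped sum equals exactly $1$. The containment check $u_i\wedge(B_i)\subseteq(B_j)$ is identical in both versions.
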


\begin{proof}
The empty family is immediate, so assume $m\ge1$.
Let $V$ be the ambient space and $n=\dim V$. Choose arbitrary nonzero
$u_i\in\Lambda^{a_i}A_i$ and set $M=\bigcap_{i=1}^m(B_i)$.
For $i\ne j$, we have $u_i\in(B_j)$, hence
$u_i\wedge(B_i)\subseteq(B_j)$; also
$u_i\wedge(B_i)\subseteq(B_i)$. Thus
$u_i\wedge(B_i)\subseteq M$ for every $i$, so
Theorem~\ref{thm:sandwich} applies. Since each $B_i\ne0$,
$M_0=0$ and $M_n=\Lambda^nV$. Hence, summing
\eqref{eq:normalized-difference} over $1\le k\le n$ gives
\[
 1=\sum_{k=1}^{n}\frac{\lambda_n(M,k)}{k\binom nk}.
\]
Applying Theorem~\ref{thm:sandwich} and Lemma~\ref{lem:sum}, we obtain
\begin{equation*}
 1\ge\sum_{i=1}^{m}\sum_{k=1}^{n}
       \frac{b_i\binom{n-a_i-b_i}{k-a_i-1}}{k\binom nk}
  =\sum_{i=1}^{m}\frac{1}{\binom{a_i+b_i}{a_i}}.
 \qedhere
\end{equation*}
\end{proof}

\subsection{A basis-exchange inequality}
\label{subsec:exchanges}

For nonnegative integers $p,q$ and positive integers $a,b$, put
\begin{equation}\label{eq:exchange-weight}
 H_{p,q}(a,b)=
 \sum_{\ell=0}^{\min(p,q)}\binom{a}{p-\ell}\binom{b}{q-\ell}.
\end{equation}
The next lemma applies Proposition~\ref{thm:subspaces} after exchanging
basis vectors between the two members of each pair. The condition
$\dim(A_i\cap B_j)>s$ ensures that all distinct pairs in the enlarged
family remain cross-intersecting.

\begin{lemma}\label{lem:exchange}
Let $s\ge0$ be an integer, and let $(A_i,B_i)$, $1\le i\le m$, be
pairs of subspaces of a finite-dimensional vector space over an arbitrary
field, with $m\ge2$. Suppose that
\[
 A_i\cap B_i=\{0\},\qquad
 \dim(A_i\cap B_j)>s\quad(i\ne j).
\]
Writing $a_i=\dim A_i$ and $b_i=\dim B_i$, we have, for all nonnegative
integers $p,q$ with $p+q\le s$,
\begin{equation}\label{eq:exchange-bound}
 \sum_{i=1}^{m}
 \frac{H_{p,q}(a_i,b_i)}{\binom{a_i+b_i}{a_i-p+q}}\le1.
\end{equation}
\end{lemma}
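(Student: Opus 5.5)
Fix bases and, for each $i$, build many new pairs by exchanging basis vectors between $A_i$ and $B_i$; the enlarged family will satisfy the hypotheses of Proposition~\ref{thm:subspaces}. Since $A_i\cap B_i=\{0\}$, we have $A_i\oplus B_i$ of dimension $a_i+b_i$. Choose a basis $x_1,\dots,x_{a_i}$ of $A_i$ and a basis $y_1,\dots,y_{b_i}$ of $B_i$.

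For each $\ell\le\min(p,q)$, each $(p-\ell)$-subset $S\subseteq\{x\}$ and each $(q-\ell)$-subset $T\subseteq\{y\}$, define
\[
A_i^{S,T}=\operatorname{span}\bigl((\{x\}\setminus S)\cup T\bigr),\qquad
B_i^{S,T}=\operatorname{span}\bigl((\{y\}\setminus T)\cup S\bigr).
\]
Here $\ell$ plays no role beyond fixing the sizes: the net move is $p-q$ vectors from $A$ to $B$. So $\dim A_i^{S,T}=a_i-(p-\ell)+(q-\ell)=a_i-p+q$, and $B_i^{S,T}$ has the complementary dimension. The two members are spanned by complementary subsets of the basis of $A_i\oplus B_i$, so $A_i^{S,T}\cap B_i^{S,T}=\{0\}$.

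The number of new pairs coming from $i$ is exactly $H_{p,q}(a_i,b_i)$, and each has weight $1/\binom{a_i+b_i}{a_i-p+q}$. Summing over the enlarged family therefore gives the left side of~\eqref{eq:exchange-bound}, provided we check three things: every member is nonzero, distinct pairs cross-intersect, and the diagonal condition holds (already shown).

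\emph{Cross-intersection between different original indices.} For $i\ne j$, note that $A_i^{S,T}\supseteq\operatorname{span}(\{x\}\setminus S)$, which has codimension $|S|\le p$ in $A_i$. Likewise $B_j^{S',T'}$ contains a subspace of codimension $|T'|\le q$ in $B_j$. Intersecting $A_i\cap B_j$, of dimension at least $s+1$, with these two subspaces loses at most $p+q\le s$ dimensions. Hence $A_i^{S,T}\cap B_j^{S',T'}\ne\{0\}$.

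\emph{Cross-intersection within one index.} Take $(S,T)\ne(S',T')$ for the same $i$, and compare $A^{S,T}$ with $B^{S',T'}$. The first is spanned by $(\{x\}\setminus S)\cup T$, the second by $(\{y\}\setminus T')\cup S'$. Both are coordinate subspaces in the same basis, so they intersect nontrivially exactly when the index sets meet, that is, when $S'\not\subseteq S$ or $T\not\subseteq T'$. If neither holds, then $S'\subseteq S$ and $T\subseteq T'$.

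\emph{Main obstacle.} Inclusions $S'\subseteq S$, $T\subseteq T'$ with $(S,T)\ne(S',T')$ are possible when different $\ell$ values are mixed, since then $|S|-|T|=|S'|-|T'|=p-q$. Equality of this difference forces $|S|-|S'|=|T|-|T'|$. With $S'\subseteq S$ this difference is $\ge0$, and with $T\subseteq T'$ it is $\le0$. Hence it is zero, so $S=S'$ and $T=T'$, a contradiction. Thus all pairs cross-intersect.

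\emph{Nonzero members.} Proposition~\ref{thm:subspaces} requires all members to be nonzero. A member can vanish only in the edge case $a_i-p+q=0$ or $b_i+p-q=0$. In that case the binomial $\binom{a_i+b_i}{a_i-p+q}$ equals $1$, and this corner needs separate handling. I would argue that such an index, say with $A_i^{S,T}=0$, cannot coexist with another pair: cross-intersection with the pairs from $j\ne i$, which exist since $m\ge2$, fails, because the dimension count above shows $A_i^{S,T}$ would contain a nonzero vector. So this case cannot occur when $m\ge 2$. Applying Proposition~\ref{thm:subspaces} to the enlarged family then gives~\eqref{eq:exchange-bound}.
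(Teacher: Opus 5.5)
Your proposal is correct and follows the paper's proof: the same basis-exchange pairs spanned by $(E_i\setminus S)\cup T$ and $(F_i\setminus T)\cup S$, the same codimension count $\dim\ge s+1-|S|-|T'|\ge1$ for $i\ne j$, and a within-index argument equivalent to the paper's (two distinct equal-size subsets of $E_i\cup F_i$ cannot be nested). The one small difference is that the paper gets nonzero members directly from $a_i,b_i\ge s+1$, which uses $m\ge2$; your contradiction via the cross-index dimension count is also valid but more roundabout.
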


\begin{proof}
The cross-intersection conditions give $a_i,b_i\ge s+1$. For each $i$,
choose bases $E_i$ of $A_i$ and $F_i$ of $B_i$. Since
$A_i\cap B_i=\{0\}$, the set $E_i\cup F_i$ is linearly independent.
For every $\ell$ with $0\le\ell\le\min(p,q)$ and all subsets
$S\subseteq E_i$, $T\subseteq F_i$ satisfying
\[
 |S|=p-\ell,\qquad |T|=q-\ell,
\]
define
\begin{equation}\label{eq:exchange-pairs}
 \begin{split}
 C_{i,S,T}&=\operatorname{span}\bigl((E_i\setminus S)\cup T\bigr),\\
 D_{i,S,T}&=\operatorname{span}\bigl((F_i\setminus T)\cup S\bigr).
 \end{split}
\end{equation}
The first subspace determines $S,T$, and $\ell$ is determined by $|S|$.
Consequently, the $i$th original pair gives exactly $H_{p,q}(a_i,b_i)$
distinct pairs. Each satisfies
\[
 C_{i,S,T}\cap D_{i,S,T}=\{0\},\qquad
 \dim C_{i,S,T}=a_i-p+q,\quad
 \dim D_{i,S,T}=b_i+p-q.
\]
Both dimensions are positive because $a_i,b_i\ge s+1$ and $p+q\le s$.

We verify the cross-intersection conditions for the enlarged family.
For a fixed $i$, two distinct choices $(S,T)$ and $(S',T')$ give distinct
subsets $(E_i\setminus S)\cup T$ and $(E_i\setminus S')\cup T'$ of
$E_i\cup F_i$ having the same cardinality. There is therefore a basis
vector in the first subset but not the second. This vector belongs to
$C_{i,S,T}\cap D_{i,S',T'}$, so this intersection is nonzero.

Now let $i\ne j$. The subspaces
$\operatorname{span}(E_i\setminus S)$ and
$\operatorname{span}(F_j\setminus T')$ have codimensions $|S|$ in $A_i$
and $|T'|$ in $B_j$, respectively. Restricting to these subspaces reduces
the dimension of $A_i\cap B_j$ by at most $|S|+|T'|$. Hence
\[
 \begin{split}
 \dim(C_{i,S,T}\cap D_{j,S',T'})
 &\ge\dim(A_i\cap B_j)-|S|-|T'|\\
 &\ge s+1-p-q\ge1.
 \end{split}
\]
Thus all distinct members of the enlarged family satisfy the symmetric
cross-intersection condition. Applying Proposition~\ref{thm:subspaces}
and grouping the terms arising from each original pair gives
\[
 1\ge\sum_{i=1}^{m}
 \frac{H_{p,q}(a_i,b_i)}
      {\binom{(a_i-p+q)+(b_i+p-q)}{a_i-p+q}},
\]
which is~\eqref{eq:exchange-bound}.
\end{proof}

Taking $p=s$ and $q=0$ in Lemma~\ref{lem:exchange}, and using
\begin{equation}\label{eq:one-sided-identity}
 \frac{\binom{a}{s}}{\binom{a+b}{a-s}}
 =\frac{\binom{b+s}{s}}{\binom{a+b}{a}},
\end{equation}
we obtain, under the hypotheses of that lemma,
\begin{equation}\label{eq:subspaces-gap}
 \sum_{i=1}^{m}
 \frac{\binom{b_i+s}{s}}{\binom{a_i+b_i}{a_i}}\le1.
\end{equation}
For $s=0$, the construction leaves the original family unchanged and
\eqref{eq:subspaces-gap} is Proposition~\ref{thm:subspaces}.

\medskip
\noindent\textbf{Remark.}
For sets, the case $t=0$ and arbitrary $s$ also follows directly from the
classical Bollob\'as inequality. Indeed, for each
$S\in\binom{A_i}{s}$ replace $(A_i,B_i)$ by
$(A_i\setminus S,B_i\cup S)$. The enlarged family satisfies the ordinary
symmetric cross-intersection condition, and hence
\[
 \sum_{i=1}^m
 \frac{\binom{a_i}{s}}{\binom{a_i+b_i}{a_i-s}}\le1.
\]
Using~\eqref{eq:one-sided-identity} gives~\eqref{eq:subspaces-gap} in the
set case. For $t>0$, however, the common section need not leave all resulting subspaces
coordinate with respect to a single basis, so the general proof requires
Proposition~\ref{thm:subspaces}.

\subsection{Reduction to \texorpdfstring{$t=0$}{t=0} by a common section}
\label{subsec:sections}

To prove the subspace case of the Main Theorem, we shall use the following
generic-position lemma; see \cite[Lemma~26.14]{FT}.

\begin{lemma}\label{lem:common-section}
Let $V$ be a finite-dimensional vector space over an infinite field, and let
$\mathcal S$ be a finite family of proper subspaces of $V$. For every integer
$t$ with $0\le t\le\dim V$, there exists a subspace $W\subseteq V$ of
codimension $t$ such that
\[
 \dim(W\cap S)=\max\{\dim S-t,0\}
 \qquad(S\in\mathcal S).
\]
\end{lemma}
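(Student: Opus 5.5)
I would prove Lemma~\ref{lem:common-section} by choosing $W$ one hyperplane at a time. Set $n=\dim V$ and induct on $t$; for $t=0$ take $W=V$. The inductive step is the case $t=1$, applied to the smaller space. Suppose $W'$ has codimension $t-1$ and satisfies $\dim(W'\cap S)=\max\{\dim S-t+1,0\}$ for every $S\in\mathcal S$. We want a hyperplane $W$ of $W'$ such that, for every $S$ with $W'\cap S\ne\{0\}$,
\[
 \dim(W\cap S)=\dim(W'\cap S)-1 .
\]
For every other $S$ we automatically have $W\cap S=\{0\}$. Combining the two cases gives $\dim(W\cap S)=\max\{\dim S-t,0\}$, and $W$ has codimension $t$ in $V$.

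For the hyperplane step, let $U$ be a finite-dimensional space over an infinite field and $\mathcal T$ a finite family of nonzero subspaces of $U$ (here $U=W'$ and $\mathcal T=\{W'\cap S\}\setminus\{0\}$). A hyperplane $H=\ker\varphi$, with $\varphi\in U^*\setminus\{0\}$, satisfies $\dim(H\cap T)=\dim T-1$ exactly when $\varphi|_T\ne0$. This in turn means that $\varphi$ lies outside the annihilator $T^\perp\subseteq U^*$. Because $T\ne\{0\}$, each $T^\perp$ is a proper subspace of $U^*$. A finite union of proper subspaces cannot cover a vector space over an infinite field; this is the same fact used in Case~1 of Theorem~\ref{thm:sandwich}. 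Hence some nonzero $\varphi$ avoids all the annihilators $T^\perp$, and we take $H=\ker\varphi$.

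Two details remain. First, each step needs $U=W'\ne\{0\}$, since otherwise there is no hyperplane to take. This holds because $\dim W'=n-t+1\ge1$ whenever $t\le n$. Second, the case $\dim(W'\cap S)=0$ must be handled separately: there $W\cap S\subseteq W'\cap S=\{0\}$, and $\max\{\dim S-t,0\}=0$ because $\dim S-t+1\le0$. The properness of the members of $\mathcal S$ is not actually needed for this argument, so it can be ignored.

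The only step that needs care is the correct bookkeeping of the $\max$ across the induction. The genuinely nontrivial input is the infinite-field covering fact, and it is the only place where infiniteness of the field is used.
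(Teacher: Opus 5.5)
Your proof is correct. The paper gives no argument of its own for this lemma; it only points to \cite[Lemma~26.14]{FT}. Your hyperplane-by-hyperplane induction is therefore a self-contained replacement for that citation, not a variant of a proof in the text.

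The bookkeeping is right. If $W'\cap S\ne\{0\}$, then $\dim(W'\cap S)=\dim S-t+1\ge1$, so $\dim S-t\ge0$ and cutting by $W$ gives exactly $\max\{\dim S-t,0\}$. If $W'\cap S=\{0\}$, then $\dim S-t<0$ and $W\cap S=\{0\}$.

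What your route buys is economy of input. Each step reduces to choosing one functional outside finitely many proper subspaces $T^{\perp}$ of $U^*$. This is the same covering fact already used in Case~1 of Theorem~\ref{thm:sandwich}. So the argument works verbatim over every infinite field, which is exactly the generality the paper needs after passing to $\overline{\mathbb F}$ in the proof of the Main Theorem. A one-shot choice of all $t$ defining functionals ``in general position'' would usually be justified by a polynomial non-vanishing argument instead, and your stepwise approach avoids that.

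Your observation that properness of the members of $\mathcal S$ is superfluous is also correct: for $S=V$ the identity $\dim(W\cap V)=n-t$ is automatic. In the paper that hypothesis only describes the family to which the lemma is applied.
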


\begin{proof}[Proof of the Main Theorem]
We first prove the subspace case. Since $m\ge2$, the cross-intersection
conditions give $a_i,b_i\ge t+s+1$ for every $i$. For $t=0$, apply
\eqref{eq:subspaces-gap}. Assume $t>0$. By the scalar-extension argument at
the end of the proof of Theorem~\ref{thm:sandwich}, we may replace
$\mathbb F$ by $\overline{\mathbb F}$ if necessary and assume that it is
infinite. Let $V$ be the ambient space.

Consider the finite family
\[
 \mathcal S=\{A_i,B_i:1\le i\le m\}
              \cup\{A_i\cap B_j:1\le i,j\le m\}.
\]
Each $A_i$ and $B_i$ is a proper subspace: if $A_i=V$, then
$b_i\le t$, contrary to $b_i\ge t+s+1$, and similarly $B_i\ne V$.
Hence every member of $\mathcal S$ is proper.
By Lemma~\ref{lem:common-section}, there exists a subspace
$W\subseteq V$ of codimension $t$ such that
\[
 \dim(W\cap S)=\max\{\dim S-t,0\}
 \qquad(S\in\mathcal S).
\]
For $A'_i=A_i\cap W$ and $B'_i=B_i\cap W$, it follows that
$\dim A'_i=a_i-t\ge s+1$ and $\dim B'_i=b_i-t\ge s+1$.
Since $A'_i\cap B'_j=W\cap(A_i\cap B_j)$, we also have
\[
 A'_i\cap B'_i=\{0\},\qquad
 \dim(A'_i\cap B'_j)>s\quad(i\ne j).
\]
Applying~\eqref{eq:subspaces-gap} to these subspaces of $W$ gives
\[
 \sum_{i=1}^{m}
 \frac{\binom{(b_i-t)+s}{s}}
      {\binom{(a_i-t)+(b_i-t)}{a_i-t}}\le1,
\]
which is~\eqref{eq:main-bound}.

For the set case, let $X=\bigcup_{i=1}^m(A_i\cup B_i)$, and let
$\mathbb F^X$ have basis $\{e_x:x\in X\}$ over any field $\mathbb F$.
For $S\subseteq X$, set
$E_S=\operatorname{span}_{\mathbb F}\{e_x:x\in S\}$.
Then $\dim E_S=|S|$ and $E_A\cap E_B=E_{A\cap B}$. Hence the pairs
$(E_{A_i},E_{B_i})$ satisfy the subspace hypotheses of the theorem with
$a_i=|A_i|$ and $b_i=|B_i|$, and the set case follows from the subspace case.
\end{proof}

\section{Optimality and further inequalities}\label{sec:consequences}

Put
\[
 \alpha_i=a_i-t,\qquad \beta_i=b_i-t.
\]
Under the hypotheses of the Main Theorem, we have $\alpha_i,\beta_i\ge s+1$.

\subsection{The optimal bound for the F\"uredi weight sum}
\label{subsec:sharp-constant}

\begin{corollary}\label{cor:sharp-constant}
Under the hypotheses of the Main Theorem,
\begin{equation}\label{eq:sharp-constant}
 \sum_{i=1}^{m}\frac{1}{\binom{a_i+b_i-2t}{a_i-t}}
 \le\frac{2}{\binom{2s+2}{s+1}}.
\end{equation}
The constant is best possible for all nonnegative integers $t$ and $s$. For
$s\ge1$, equality holds if and only if $m=2$,
$a_1=b_1=a_2=b_2=t+s+1$, and $(A_2,B_2)=(B_1,A_1)$.
\end{corollary}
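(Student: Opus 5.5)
The plan is to add the Main Theorem to its mirror image and bound each resulting weight from below by a constant. The Main Theorem is stated to remain valid after interchanging $A_i$ and $B_i$. In the notation $\alpha_i=a_i-t$, $\beta_i=b_i-t$, this gives the two inequalities
\[
 \sum_{i=1}^m\frac{\binom{\beta_i+s}{s}}{\binom{\alpha_i+\beta_i}{\alpha_i}}\le1,
 \qquad
 \sum_{i=1}^m\frac{\binom{\alpha_i+s}{s}}{\binom{\alpha_i+\beta_i}{\alpha_i}}\le1 .
\]
For the swapped version, note that the hypotheses are symmetric and $\binom{\alpha+\beta}{\beta}=\binom{\alpha+\beta}{\alpha}$. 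Adding the two inequalities gives
\[
 \sum_i\frac{\binom{\alpha_i+s}{s}+\binom{\beta_i+s}{s}}{\binom{\alpha_i+\beta_i}{\alpha_i}}\le2 .
\]
Since $\alpha_i,\beta_i\ge s+1$ and $x\mapsto\binom{x+s}{s}$ is nondecreasing, each numerator is at least $2\binom{2s+1}{s}$. By the symmetry $\binom{2s+1}{s}=\binom{2s+1}{s+1}$ and Pascal's rule, $2\binom{2s+1}{s}=\binom{2s+2}{s+1}$. Dividing by $\binom{2s+2}{s+1}$ yields \eqref{eq:sharp-constant}.

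For optimality I will use an explicit example. Take a set $T$ with $|T|=t$ and disjoint sets $X,Y$ of size $s+1$, also disjoint from $T$. Put $A_1=T\cup X$, $B_1=T\cup Y$, $A_2=B_1$, $B_2=A_1$. Then:
\begin{itemize}
\item $|A_i\cap B_i|=t$;
\item $|A_1\cap B_2|=|A_1|=t+s+1>t+s$, and likewise $|A_2\cap B_1|=t+s+1$;
\item the sum equals $2/\binom{2s+2}{s+1}$.
\end{itemize}
This works for every $t,s$, including $s=0$, where the bound is $1$. The coordinate-subspace version of the same family gives the subspace case.

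For the equality characterization when $s\ge1$, I trace where the inequalities can be tight. When $s\ge1$, $x\mapsto\binom{x+s}{s}$ is strictly increasing. Hence equality in the termwise bound forces $\alpha_i=\beta_i=s+1$ for every $i$, that is, $a_i=b_i=t+s+1$. Each term then equals $1/\binom{2s+2}{s+1}$, so equality in the sum forces $m=2$. Finally, $\dim(A_1\cap B_2)\ge t+s+1=\dim A_1=\dim B_2$ forces $A_1=B_2$, and symmetrically $A_2=B_1$. The converse is the example above.

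The only delicate point is justifying the swapped form of the Main Theorem. I expect this to be immediate, since the hypotheses of the Main Theorem are invariant under $(A_i,B_i)\mapsto(B_i,A_i)$. Otherwise the argument is routine.
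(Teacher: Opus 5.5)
Your proof is correct and follows essentially the paper's route: the Main Theorem combined with the termwise bound $\binom{\beta_i+s}{s}\ge\binom{2s+1}{s}=\tfrac12\binom{2s+2}{s+1}$, the same two-pair example $(T\cup X,T\cup Y)$, $(T\cup Y,T\cup X)$, and strict monotonicity plus the interchanged theorem for the equality case. The only difference is cosmetic: you add the mirror inequality from the outset, whereas the paper uses one side alone for the bound and invokes the interchange only in the equality analysis, to obtain $\alpha_i=s+1$.
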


\begin{proof}
Put $c_s=\binom{2s+1}{s}$. Since $\beta_i\ge s+1$,
\[
 \binom{\beta_i+s}{s}\ge c_s.
\]
The Main Theorem therefore gives
\[
 c_s\sum_{i=1}^{m}\frac{1}{\binom{\alpha_i+\beta_i}{\alpha_i}}
 \le\sum_{i=1}^{m}
 \frac{\binom{\beta_i+s}{s}}
      {\binom{\alpha_i+\beta_i}{\alpha_i}}\le1.
\]
As $c_s=\frac12\binom{2s+2}{s+1}$, this proves~\eqref{eq:sharp-constant}.

To see that the constant is best possible, choose pairwise disjoint sets
$T,X,Y$ with $|T|=t$ and $|X|=|Y|=s+1$, and take
\[
 (A_1,B_1)=(T\cup X,T\cup Y),\qquad
 (A_2,B_2)=(T\cup Y,T\cup X).
\]
Their diagonal intersections have size $t$, their cross-intersections have
size $t+s+1$, and the left-hand side of~\eqref{eq:sharp-constant} equals
$2/\binom{2s+2}{s+1}$. The associated coordinate subspaces give equality over
any field.

Suppose now that $s\ge1$ and equality holds in~\eqref{eq:sharp-constant}.
The function $b\mapsto\binom{b+s}{s}$ is strictly increasing for
$b\ge s+1$. Equality in the preceding chain of inequalities thus gives
$\beta_i=s+1$ for every $i$. Interchanging $A_i$ and $B_i$ and applying the
same argument gives $\alpha_i=s+1$ for every $i$. Consequently,
\[
 \frac{m}{\binom{2s+2}{s+1}}=\frac{2}{\binom{2s+2}{s+1}},
\]
so $m=2$. The cross-intersection conditions, together with
$a_1=b_1=a_2=b_2=t+s+1$, imply $A_1=B_2$ and $A_2=B_1$, for sets as well
as for subspaces. Conversely, under the hypotheses of the corollary,
these dimensions and $m=2$ give equality directly.
\end{proof}

\subsection{Equality constructions for the strengthened inequality}
\label{subsec:equality-main}

The equality statement in Corollary~\ref{cor:sharp-constant} concerns
\eqref{eq:sharp-constant}, not the strengthened sum in
\eqref{eq:main-bound}. The latter admits equality for every prescribed
number of pairs.

\begin{proposition}\label{prop:arbitrary-m-equality}
For all integers $t,s\ge0$ and $m\ge2$, there is a strong
Bollob\'as $(t,s)$-system with exactly $m$ pairs attaining equality in
\eqref{eq:main-bound}. There is also a corresponding equality system of
coordinate subspaces over every field.
\end{proposition}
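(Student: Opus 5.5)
The plan is to reduce to $t=0$, build the $t=0$ systems by induction on $m$, and then pass to coordinate subspaces.

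\textbf{Reduction to $t=0$.} Suppose $(A_i,B_i)_{i\le m}$ is a strong Bollob\'as $(0,s)$-system on a ground set $X$ attaining equality. Adjoin a set $T$ of size $t$ disjoint from $X$ and replace each pair by $(A_i\cup T,B_i\cup T)$. Every diagonal intersection becomes exactly $T$, of size $t$. Every cross-intersection grows by exactly $t$, so it is still $>t+s$. With $\alpha_i=a_i-t$ and $\beta_i=b_i-t$, the left side of \eqref{eq:main-bound} is unchanged. For the subspace statement I will take the coordinate subspaces $E_{A_i},E_{B_i}$ as in the proof of the Main Theorem. Since $E_A\cap E_B=E_{A\cap B}$, all dimensions and intersections agree with the set case over every field. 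So everything reduces to constructing, for each $m\ge2$, a $(0,s)$-system with
\[
 \sum_{i=1}^m \frac{\binom{b_i+s}{s}}{\binom{a_i+b_i}{a_i}}=1 .
\]

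\textbf{Base case and inductive step.} For $m=2$, the example in Corollary~\ref{cor:sharp-constant} with $t=0$ works: take $(X,Y),(Y,X)$ with $|X|=|Y|=s+1$. Each term is $\binom{2s+1}{s}/\binom{2s+2}{s+1}=\tfrac12$, so the sum is $1$.

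For the inductive step, I will take an equality system with $m-1$ pairs and replace one pair $(A,B)$ by two pairs built from $A$, $B$ and a small set of new elements (about $s+1$ of them). For example, I would first try enlarging one side of each new pair by part of a fresh block $Z$, so that the two new pairs cross-intersect in at least $s+1$ elements of $Z$ in both directions.

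The new pairs have to satisfy the following.
\begin{enumerate}
\item[(a)] They remain disjoint on the diagonal.
\item[(b)] They cross-intersect every old pair $(A_j,B_j)$ in at least $s+1$ elements in both directions. This follows if each new $A$-side contains $A$ and each new $B$-side contains $B$, since then the old intersections $A\cap B_j$ and $A_j\cap B$ are inherited.
\item[(c)] The weights split exactly:
\[
 \frac{\binom{b+s}{s}}{\binom{a+b}{a}}
 =\frac{\binom{b'+s}{s}}{\binom{a'+b'}{a'}}+\frac{\binom{b''+s}{s}}{\binom{a''+b''}{a''}} .
\]
\end{enumerate}
Condition (c) forces the new sizes.

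\textbf{Main obstacle.} The hard step is finding a splitting whose sizes satisfy the identity in (c) exactly while keeping (a) and (b). A naive split that adds a single new element $z$ to one side fails, for two reasons: the diagonal and cross conditions conflict, and the weights do not add up correctly.

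What I would try first is a ``same-total-size'' family. Take a fixed block $Z$ of size $N$ and use all pairs $(A\cup S,\,B\cup(Z\setminus S))$ with $|S|=k$. These are pairwise cross-intersecting in the required strong sense only if distinct $k$-sets differ in at least $s+1$ elements, so instead I would use Lemma~\ref{lem:exchange}-type exchange families, which are exactly the configurations where that lemma's inequality is tight.

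If no clean one-step split works, I would switch to a direct construction: nested pairs $A_i\subseteq A_{i+1}$ glued with blocks of size $s+1$. I would then verify equality by a telescoping identity among the terms $\binom{\beta+s}{s}/\binom{\alpha+\beta}{\alpha}$, checking that the partial sums of the weights telescope to $1$.
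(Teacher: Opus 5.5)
Your reduction to $t=0$ by adjoining a common set $T$, your base case $m=2$, and your passage to coordinate subspaces are all correct and match what the paper does. The gap is that you never produce an equality system with $m\ge3$ pairs, and the inductive step you set up cannot exist.

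Take $t=0$ and the containments you use for (b). From $A\subseteq A''$ and $A''\cap B''=\varnothing$ you get $A\cap B''=\varnothing$, and likewise $A'\cap B=\varnothing$. Hence $A'\cap B''\subseteq(A'\setminus A)\cap(B''\setminus B)$. Together with the mirror statement, this forces $a',a''\ge a+s+1$ and $b',b''\ge b+s+1$.

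Now write $w(a,b)=\binom{b+s}{s}/\binom{a+b}{a}$. Then $w(a+1,b)/w(a,b)=\frac{a+1}{a+b+1}$ and $w(a,b+1)/w(a,b)=\frac{b+s+1}{a+b+1}$. So $w$ is nonincreasing in both variables once $a\ge s$, and hence $w(a',b'),\,w(a'',b'')\le w(a+s+1,b+s+1)$. Moreover,
\[
 \frac{w(a+s+1,b+s+1)}{w(a,b)}
 =\prod_{j=0}^{s}\frac{(a+j+1)(b+s+j+1)}{(a+b+j+1)(a+b+s+j+2)}<\frac12
\]
for $a,b\ge s+1$. Every factor is below $1$, and the factor $j=0$ is below $\tfrac12$; for $s=0$ this is simply $\binom{a+b+2}{a+1}>2\binom{a+b}{a}$. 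Thus the two new weights always sum to strictly less than the old one, and (c) is unattainable.

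Your fallback fails even earlier. If $A_i\subseteq A_j$ for some $i\ne j$, then $|A_i\cap B_j|\le|A_j\cap B_j|\le t$, which contradicts $|A_i\cap B_j|>t+s$.

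The missing idea is that you should not split one pair while freezing the others: in the extremal family every weight changes with $m$. The paper takes pairwise disjoint blocks $X_1,\ldots,X_m$ of size $r=s+1$, sets $Y=X_1\sqcup\cdots\sqcup X_m$, and uses the pairs $(T\cup X_i,\,T\cup(Y\setminus X_i))$. Then $A_i\cap B_i=T$ and $A_i\cap B_j=T\cup X_i$ for $i\ne j$, and each summand equals
\[
 \frac{\binom{(m-1)r+s}{s}}{\binom{mr}{r}}
 =\frac{\binom{mr-1}{r-1}}{\binom{mr}{r}}=\frac1m .
\]
This is exactly your ``same-total-size'' family with $A=B=\varnothing$, $k=s+1$, and $S$ running over the blocks of a partition of $Z$. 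Disjoint blocks automatically differ in $s+1$ elements, and your $m=2$ example is the special case $m=2$. Had you pursued that idea with these parameters instead of abandoning it, the whole verification would have been the one-line identity above.
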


\begin{proof}
Put $r=s+1$. Choose pairwise disjoint sets $T,X_1,\ldots,X_m$ with
$|T|=t$ and $|X_i|=r$, and write
\[
 Y=X_1\sqcup\cdots\sqcup X_m.
\]
For $1\le i\le m$, take
\[
 A_i=T\cup X_i,\qquad B_i=T\cup(Y\setminus X_i).
\]
Then $A_i\cap B_i=T$, while
\[
 A_i\cap B_j=T\cup X_i\quad(i\ne j).
\]
Hence the diagonal intersections have size $t$ and the cross-intersections
have size $t+s+1$. Also $a_i-t=r$ and $b_i-t=(m-1)r$, so every summand in
\eqref{eq:main-bound} equals
\[
 \frac{\binom{(m-1)r+s}{s}}{\binom{mr}{r}}
 =\frac{\binom{mr-1}{r-1}}{\binom{mr}{r}}
 =\frac1m.
\]
Summing gives equality. Replacing all sets by their coordinate spans in
a vector space with basis indexed by $T\cup Y$ gives equality over every
field, since dimensions and intersections are preserved.
\end{proof}

Thus the constant $1$ in the main inequality cannot be decreased even
when $m$ is prescribed. For $s\ge1$ and $m>2$, these examples do not attain
\eqref{eq:sharp-constant}: the latter has the more restrictive equality
conditions in Corollary~\ref{cor:sharp-constant}.

\medskip
\noindent\textbf{Remark.}
The equality problem has the following natural design-theoretic interpretation.
Let $T$ and $Y$ be disjoint with $|T|=t$ and $|Y|=v$, and let
$\mathcal C\subseteq\binom{Y}{w}$. For
\[
 (A_C,B_C)=\bigl(T\cup C,\,T\cup(Y\setminus C)\bigr),
 \qquad C\in\mathcal C,
\]
the strong Bollob\'as $(t,s)$ condition is equivalent to
$|C\cap D|\le w-s-1$ for distinct $C,D\in\mathcal C$. Counting
$(w-s)$-subsets contained in the blocks gives
\[
 |\mathcal C|\binom{w}{s}\le\binom{v}{w-s},
\]
which is precisely the set case of the Main Theorem for this construction.
Equality holds exactly when every $(w-s)$-subset of $Y$ lies in one block,
i.e., when $\mathcal C$ is a Steiner system $S(w-s,w,v)$. In particular,
the affine hyperplanes of $\mathbb F_2^3$ form an $S(3,4,8)$ and give an
equality example with $s=1$ and $m=14$.

\subsection{Two-sided exchange inequalities}
\label{subsec:two-sided}

Lemma~\ref{lem:exchange} gives the following two-parameter family of
inequalities.

\begin{corollary}\label{cor:two-sided}
Under the hypotheses of the Main Theorem, for all nonnegative integers $p,q$ with
$p+q\le s$,
\begin{equation}\label{eq:two-sided}
 \sum_{i=1}^{m}
 \frac{H_{p,q}(\alpha_i,\beta_i)}
      {\binom{\alpha_i+\beta_i}{\alpha_i-p+q}}\le1,
\end{equation}
where $H_{p,q}$ is defined in~\eqref{eq:exchange-weight}.
In particular, for every nonnegative integer $r$ with $2r\le s$,
\begin{equation}\label{eq:symmetric-exchange}
 \sum_{i=1}^{m}
 \frac{\displaystyle\sum_{j=0}^{r}
       \binom{\alpha_i}{j}\binom{\beta_i}{j}}
      {\binom{\alpha_i+\beta_i}{\alpha_i}}\le1.
\end{equation}
\end{corollary}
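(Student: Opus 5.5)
The plan is to reduce to $t=0$ and then apply Lemma~\ref{lem:exchange} directly, following the proof of the Main Theorem. We pass to subspaces: in the set case we use the coordinate subspaces $E_{A_i},E_{B_i}$, which preserve dimensions and intersections. We then extend scalars to $\overline{\mathbb F}$ if needed, as at the end of the proof of Theorem~\ref{thm:sandwich}.

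For $t>0$ we take the family $\mathcal S=\{A_i,B_i\}\cup\{A_i\cap B_j\}$. As in the proof of the Main Theorem, all of its members are proper, since $a_i,b_i\ge t+s+1$. Lemma~\ref{lem:common-section} then gives a subspace $W$ of codimension $t$. Setting $A_i'=A_i\cap W$ and $B_i'=B_i\cap W$, we get pairs with the following properties:
\[
\dim A_i'=\alpha_i,\qquad \dim B_i'=\beta_i,\qquad A_i'\cap B_i'=\{0\},\qquad \dim(A_i'\cap B_j')>s\quad (i\ne j).
\]
The case $t=0$ needs no reduction. In both cases Lemma~\ref{lem:exchange} applies to the pairs $(A_i',B_i')$ with $m\ge2$ and any $p+q\le s$. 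This yields \eqref{eq:two-sided} at once, with $a_i,b_i$ replaced by $\alpha_i,\beta_i$.

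For \eqref{eq:symmetric-exchange} we specialize to $p=q=r$, which is allowed since $2r\le s$. The denominator becomes $\binom{\alpha_i+\beta_i}{\alpha_i}$. The numerator is
\[
H_{r,r}(\alpha_i,\beta_i)=\sum_{\ell=0}^{r}\binom{\alpha_i}{r-\ell}\binom{\beta_i}{r-\ell},
\]
and the reindexing $j=r-\ell$ turns it into $\sum_{j=0}^r\binom{\alpha_i}{j}\binom{\beta_i}{j}$.

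There is no real obstacle here. The only point needing care is that the hypotheses of Lemma~\ref{lem:exchange} survive the common section, in particular the bound $\dim(A_i'\cap B_j')=\dim(A_i\cap B_j)-t>s$. This was already checked in the proof of the Main Theorem, and we reuse it verbatim.
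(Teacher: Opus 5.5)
Your proposal is correct and follows the paper's proof: coordinate subspaces for sets, scalar extension and the common section of Lemma~\ref{lem:common-section} to obtain pairs $(A_i',B_i')$ of dimensions $\alpha_i,\beta_i$ with trivial diagonal and cross-intersections of dimension $>s$, then Lemma~\ref{lem:exchange}, and finally $p=q=r$ with the reindexing $j=r-\ell$. Your verification that $\dim(W\cap(A_i\cap B_j))=\dim(A_i\cap B_j)-t\ge s+1$ is exactly what the paper relies on when it says to reuse the argument from the Main Theorem.
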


\begin{proof}
For subspaces, apply the scalar extension and common section argument in
the proof of the subspace case of the Main Theorem. It gives subspace pairs
$(A'_i,B'_i)$ of dimensions $\alpha_i,\beta_i$ such that
\[
 A'_i\cap B'_i=\{0\},\qquad
 \dim(A'_i\cap B'_j)>s\quad(i\ne j).
\]
For $t=0$, simply take $A'_i=A_i$ and $B'_i=B_i$.
Lemma~\ref{lem:exchange} gives~\eqref{eq:two-sided}. For sets, use the
coordinate subspace construction in the proof of the set case of the Main Theorem
and apply the subspace result. Finally, take $p=q=r$ and reindex the sum
in~\eqref{eq:exchange-weight} to obtain~\eqref{eq:symmetric-exchange}.
\end{proof}

The choices $(p,q)=(s,0)$ and $(0,s)$ recover the main inequalities
and their versions with $A_i,B_i$ interchanged.

\section*{Statement on the use of AI}

During the preparation of this manuscript, the authors developed the
exterior-algebra approach and the overall strategy of the paper. OpenAI Codex
was used as an auxiliary tool for exploratory discussion and English-language
editing. The authors checked every proof in detail, revised the arguments,
notation, and organization, and finalized the presentation of the results. The
authors take full responsibility for the content of the final manuscript.

\enlargethispage{2\baselineskip}


\begin{thebibliography}{99}
\bibitem{Alon}
Noga Alon,
\emph{An extremal problem for sets with applications to graph theory},
J. Combin. Theory Ser. A \textbf{40} (1985), no.~1, 82--89.
\href{https://doi.org/10.1016/0097-3165(85)90048-2}{doi:\nolinkurl{10.1016/0097-3165(85)90048-2}}.

\bibitem{BF}
L\'aszl\'o Babai and P\'eter Frankl,
\emph{Linear Algebra Methods in Combinatorics, with Applications
to Geometry and Computer Science},
Preliminary Version~2, Department of Computer Science,
University of Chicago, 1992.

\bibitem{Bollobas}
B\'ela Bollob\'as,
\emph{On generalized graphs},
Acta Math. Acad. Sci. Hungar. \textbf{16} (1965), 447--452.
\href{https://doi.org/10.1007/BF01904851}{doi:\nolinkurl{10.1007/BF01904851}}.

\bibitem{Frankl}
Peter Frankl,
\emph{An extremal problem for two families of sets},
European J. Combin. \textbf{3} (1982), no.~2, 125--127.
\href{https://doi.org/10.1016/S0195-6698(82)80025-5}{doi:\nolinkurl{10.1016/S0195-6698(82)80025-5}}.

\bibitem{FT}
Peter Frankl and Norihide Tokushige,
\emph{Extremal Problems for Finite Sets},
Student Mathematical Library, vol.~86,
American Mathematical Society, Providence, RI, 2018.
\href{https://doi.org/10.1090/stml/086}{doi:\nolinkurl{10.1090/stml/086}}.

\bibitem{Furedi}
Zolt\'an F\"uredi,
\emph{Geometrical solution of an intersection problem for two hypergraphs},
European J. Combin. \textbf{5} (1984), no.~2, 133--136.
\href{https://doi.org/10.1016/S0195-6698(84)80026-8}{doi:\nolinkurl{10.1016/S0195-6698(84)80026-8}}.

\bibitem{Heg}
G\'abor Heged\"us,
\emph{On F\"uredi's conjecture},
Acta Math. Hungar. \textbf{174} (2024), no.~1, 244--246.
\href{https://doi.org/10.1007/s10474-024-01461-8}{doi:\nolinkurl{10.1007/s10474-024-01461-8}}.

\bibitem{HegPreprint}
G\'abor Heged\"us,
\emph{About F\"uredi's conjecture},
arXiv:2406.05841v1, 2024.
\url{https://arxiv.org/abs/2406.05841v1}.

\bibitem{HF}
G\'abor Heged\"us and P\'eter Frankl,
\emph{Variations on the Bollob\'as set-pair theorem},
European J. Combin. \textbf{120} (2024), article~103983.
\href{https://doi.org/10.1016/j.ejc.2024.103983}{doi:\nolinkurl{10.1016/j.ejc.2024.103983}}.

\bibitem{KKK}
Dong Yeap Kang, Jaehoon Kim, and Younjin Kim,
\emph{On the Erd\H{o}s--Ko--Rado theorem and the Bollob\'as theorem
for $t$-intersecting families},
European J. Combin. \textbf{47} (2015), 68--74.
\href{https://doi.org/10.1016/j.ejc.2015.01.009}{doi:\nolinkurl{10.1016/j.ejc.2015.01.009}}.

\bibitem{LFW}
Zhiyi Liu, Lihua Feng, and Tingzeng Wu,
\emph{Bollob\'as-type inequalities for subspaces via weight invariance},
arXiv:2603.25007v1, 2026.
\url{https://arxiv.org/abs/2603.25007v1}.

\bibitem{Lovasz}
L\'aszl\'o Lov\'asz,
\emph{Flats in matroids and geometric graphs},
in \emph{Combinatorial Surveys: Proceedings of the Sixth British
Combinatorial Conference}, Academic Press, London, 1977, 45--86.

\bibitem{Lubell}
David Lubell,
\emph{A short proof of Sperner's lemma},
J. Combin. Theory \textbf{1} (1966), no.~2, 299.
\href{https://doi.org/10.1016/S0021-9800(66)80035-2}{doi:\nolinkurl{10.1016/S0021-9800(66)80035-2}}.

\bibitem{Meshalkin}
L.~D. Meshalkin,
\emph{Generalization of Sperner's theorem on the number of subsets
of a finite set},
Theory Probab. Appl. \textbf{8} (1963), no.~2, 203--204.
\href{https://doi.org/10.1137/1108023}{doi:\nolinkurl{10.1137/1108023}}.

\bibitem{SW}
Alex Scott and Elizabeth Wilmer,
\emph{Combinatorics in the exterior algebra and the Bollob\'as Two Families Theorem},
J. London Math. Soc. \textbf{104} (2021), no.~4, 1812--1839.
\href{https://doi.org/10.1112/jlms.12484}{doi:\nolinkurl{10.1112/jlms.12484}}.



\bibitem{Talbot}
John Talbot,
\emph{A new Bollob\'as-type inequality and applications to $t$-intersecting families of sets},
Discrete Math. \textbf{285} (2004), nos.~1--3, 349--353.
\href{https://doi.org/10.1016/j.disc.2004.04.002}{doi:\nolinkurl{10.1016/j.disc.2004.04.002}}.

\bibitem{Tuza87}
Zsolt Tuza,
\emph{Inequalities for two set systems with prescribed intersections},
Graphs Combin. \textbf{3} (1987), no.~1, 75--80.
\href{https://doi.org/10.1007/BF01788531}{doi:\nolinkurl{10.1007/BF01788531}}.

\bibitem{TuzaSurvey}
Zsolt Tuza,
\emph{Applications of the set-pair method in extremal hypergraph theory},
in \emph{Extremal Problems for Finite Sets} (Visegr\'ad, 1991),
Bolyai Society Mathematical Studies, vol.~3,
J\'anos Bolyai Mathematical Society, Budapest, 1994, 479--514.

\bibitem{WLLF}
Yongjiang Wu, Yongtao Li, Lu Lu, and Lihua Feng,
\emph{Subspace variations of the weighted skew Bollob\'as theorem},
arXiv:2603.02698v1, 2026.
\url{https://arxiv.org/abs/2603.02698v1}.

\bibitem{Yamamoto}
Koichi Yamamoto,
\emph{Logarithmic order of free distributive lattice},
J. Math. Soc. Japan \textbf{6} (1954), nos.~3--4, 343--353.
\href{https://doi.org/10.2969/jmsj/00630343}{doi:\nolinkurl{10.2969/jmsj/00630343}}.

\bibitem{YuEtAl}
Wenjun Yu, Xiangliang Kong, Yuanxiao Xi, Xiande Zhang, and Gennian Ge,
\emph{Bollob\'as-type theorems for hemi-bundled two families},
European J. Combin. \textbf{100} (2022), article~103438.
\href{https://doi.org/10.1016/j.ejc.2021.103438}{doi:\nolinkurl{10.1016/j.ejc.2021.103438}}.

\bibitem{Yue}
Erfei Yue,
\emph{Some new Bollob\'as-type inequalities},
Discrete Math. \textbf{349} (2026), no.~5, article~114948.
\href{https://doi.org/10.1016/j.disc.2025.114948}{doi:\nolinkurl{10.1016/j.disc.2025.114948}}.

\bibitem{Zhu}
C.~Z. Zhu,
\emph{On two set-systems with restricted cross-intersections},
European J. Combin. \textbf{16} (1995), no.~6, 655--658.
\href{https://doi.org/10.1016/0195-6698(95)90047-0}{doi:\nolinkurl{10.1016/0195-6698(95)90047-0}}.

\end{thebibliography}
\end{document}